\newtheorem{theorem}{Theorem}[section]
\newtheorem{lemma}[theorem]{Lemma}
\newtheorem{proposition}[theorem]{Proposition}
\newtheorem{corollary}[theorem]{Corollary}
\theoremstyle{definition}
\newtheorem{definition}[theorem]{Definition}
\newtheorem{remark}[theorem]{Remark}
\newtheorem{example}[theorem]{Example}
\newcommand{\V}{\mathcal{V}_k}
\newcommand{\E}{\bold{E}}
\newcommand{\F}{\bold{F}}
\newcommand{\g}{\mathfrak{g}}
\newcommand{\R}{\mathcal{R}}
\newcommand{\M}{\mathcal{M}}
\newcommand{\PP}{\mathcal{P}}
\newcommand{\C}{\mathbb{C}}
\newcommand{\Z}{\mathbb{Z}}
\newcommand{\FF}{B}
\newcommand{\Sy}{\mathfrak{S}}
\begin{document}

\title{Polynomial functors and categorifications of Fock space}
\author{Jiuzu Hong, Antoine Touz\'e, Oded Yacobi}
\date{}
\maketitle

\begin{abstract}
Fix an infinite  field $k$ of characteristic $p$, and let $\g$ be the  Kac-Moody algebra $\mathfrak{sl}_{\infty}$ if $p=0$ and $\hat{\mathfrak{sl}}_p$ otherwise.  Let $\PP$ denote the category of strict polynomial functors defined over $k$.   We describe a $\g$-action on $\PP$ (in the sense of Chuang and Rouquier) categorifying the Fock space representation of $\g$.  
\end{abstract}

\begin{center}
%\vspace*{.3in}
{\footnotesize \it Dedicated, with gratitude and \\ admiration, to Prof. Nolan Wallach \\on the occasion of his $70^{th}$ birthday.}
\end{center}

\section{Introduction}
Fix an infinite field $k$ of characteristic $p$.  In this work we elaborate on a study, begun in \cite{HY11}, of the relationship between the symmetric groups $\mathfrak{S}_d$, the general linear groups $GL_n(k)$, and the Kac-Moody algebra $\g$, where

\begin{equation*}
\mathfrak{g}=
\left\{
\begin{array}{lr}
\mathfrak{sl}_{\infty}(\mathbb{C}) \text{   if } p=0  &  \\
\widehat{\mathfrak{sl}}_{p}(\mathbb{C}) \text{  if }  p\neq 0
\end{array}
\right.
\end{equation*}
In \cite{HY11} Hong and Yacobi defined a  category $\M$ constructed as an inverse limit of polynomial representations of the general linear groups.  The main result of \cite{HY11} is that $\g$ acts on $\M$ (in the sense of Chuang and Rouquier), and categorifies the Fock space representation of $\g$.  

The result in  \cite{HY11} is motivated by a well known relationship between the basic representation of $\g$ and the symmetric groups.  Let $\R_d$ denote the category of representations of $\Sy_d$ over $k$, and let $\R$ denote the direct sum of categories $\R_d$.  By work going back at least to Lascoux, Leclerc, and Thibon \cite{LLT}, it is known that $\R$ is a categorification of the basic representation of $\g$.  This means that there are exact endo-functors $E_i,F_i:\R \to \R$ ($i \in \Z/p\Z$) whose induced operators on the Grothedieck group give rise to a representation of $\g$ isomorphic to its basic representation. 

Since $\R$ consists of all representations of all symmetric groups, and the representations of symmetric groups and general linear groups are related via Schur-Weyl duality, it is natural to seek a category which canonically considers all polynomial representations of all general linear groups.  This is precisely the limit category of polynomial representations alluded to above.  

The limit category $\M$ is naturally equivalent to the category $\PP$ of ``strict polynomial functors of finite degree'' introduced by Friedlander and Suslin in \cite{FS} (in characteristic zero the category $\PP$ appears in \cite{Mac}).  The objects of $\PP$ are endo-functors on $\V$ (the category of finite dimensional vector spaces over $k$) satisfying natural polynomial conditions, and the morphisms are  natural transformations of functors.

Friedlander and Suslin's original motivation was to study the finite generation of affine group schemes.  Since their landmark work, the theory of polynomial functors has developed in many directions. In algebraic topology, the category $\PP$ is connected to the category of unstable modules over the Steenrod algebra, to the cohomology of the finite linear groups \cite{FFSS,K}, and also to derived functors in the sense of Dold and Puppe \cite{T1}. Polynomial functors are also applied to the cohomology of group schemes. For example the category $\PP$ is used in the study of support varieties for finite group schemes \cite{SFB}, to compute the cohomology of classical groups \cite{T2}, and in the proof of cohomological finite generation for reductive groups \cite{TVdK}. 

The goal of this paper is to develop an explicit connection relating the category of strict polynomial functors to the affine Kac-Moody algebra $\g$.  We describe an action of $\g$ on $\PP$ (in the sense of Chuang and Rouquier), which is completely independent of the results or arguments in \cite{HY11}.  The main advantage of this approach is that the category $\PP$ affords a more canonical setting for the $\g$-categorification.  Indeed, many of the  results obtained in \cite{HY11} have a simple and natural formulation in this setting.  Further, we hope that the ideas presented here will provide new insight to the category polynomial functors.  As an example of this, in the last section of the paper we describe how the categorification theory implies that certain blocks of the category $\PP$ are derived equivalent. 

The category of strict polynomial functors is actually defined over arbitrary fields, but the general definition given in [FS] is more involved than the one we use (the problem comes from the fact that different polynomials might induce the same function over finite fields).  All our results remain valid in this general context, but we have opted to work over an infinite field to simplify the exposition.  In addition, we assume in our main theorem that $p\neq2$.  The theorem is valid also for $p=2$, but including this case would complicate our exposition.    

In the sequel to this work we continue the study of $\PP$ from the point of view of higher representation theory \cite{HY2}.  We show that Khovanov's category $\mathcal{H}$ naturally acts on $\PP$, and this gives a categorification of the Fock space representation of the Heisenberg algebra when $char(k)=0$.  When $char(k)>0$ the commuting actions of $\g'$ (the derived algebra of $\g$) and the Heisenberg algebra are also categorified.  Moreover, we formulate Schur-Weyl duality as a functor from $\PP$ to the category of linear species.  The category of linear species is known to carry actions of $\g$ and the Heisenberg algebra.  We prove that Schur-Weyl duality is a tensor functor which is a morphism of these categorifications structures.

\section{Type A Kac-Moody algebras}
\label{Fock space}
 Let $\mathfrak{g}$ denote the following Kac-Moody algebra (over $\C$):
\begin{equation*}
\mathfrak{g}=
\left\{
\begin{array}{lr}
\mathfrak{sl}_{\infty} \text{   if } p=0  &  \\
\widehat{\mathfrak{sl}}_{p} \text{  if }  p>0
\end{array}
\right.
\end{equation*}
By definition, the Kac-Moody algebra $\mathfrak{sl}_{\infty}$ is associated to the Dynkin diagram:
$$
\xymatrix{
\cdots \ar@{-}[r] & \bullet \ar@{-}[r] & \bullet \ar@{-}[r] & \bullet\ar@{-}[r] & \bullet\ar@{-}[r] & \cdots}
$$
while the Kac-Moody algebra $\widehat{\mathfrak{sl}}_{p}$ is associated to the diagram with $p$ nodes:
\linebreak
\linebreak
$$
\xymatrix{
\bullet \ar@{-}@/^2pc/[rrrr] \ar@{-}[r] & \bullet \ar@{-}[r] & \cdots \ar@{-}[r] & \bullet\ar@{-}[r] & \bullet
}
$$
The Lie algebra $\g$ has standard Chevalley generators $\{e_i,f_i\}_{i \in \Z /p\Z}$. Here, and throughout, we identify $\Z/p\Z$ with the prime subfield of $k$.  For the  precise relations defining $\g$ see e.g. \cite{Kac}.   

We let $Q$ denote the root lattice and $P$ the weight lattice of $\g$.  Let $\{\alpha_i : i \in \Z/p\Z \}$ denote the set of simple roots, and $\{h_i : i \in \Z/p\Z \}$ the simple coroots. The cone of dominant weights is denoted $P_+$ and denote the fundamental weights $\{\Lambda_i:i \in \Z/p\Z\}$, i.e. $\left< h_i , \Lambda_j \right> = \delta_{ij}$.  When $p >0 $ the Cartan subalgebra of $\g$ is spanned by the $h_i$ along with an element $d$.  In this case we also let $\delta=\sum_{i}\alpha_i$; then $\Lambda_0,...,\Lambda_{p-1},\delta$ form a $\Z$-basis for $P$.  When $p=0$ the fundamental weights are a $\Z$ basis for the weight lattice.     

Let $\mathfrak{S}_n$ denote the symmetric group on $n$ letters.  $\mathfrak{S}_n$ acts on the polynomial algebra $\Z[x_1,...,x_n]$ by permuting variables, and we denote by $B_n=\Z[x_1,...,x_n]^{\mathfrak{S}_n}$  the polynomials invariant under this action.  There is a natural projection
$
B_n \twoheadrightarrow B_{n-1}
$ 
given by setting the last variable to zero.  Consequently, the rings $B_{n}$ form a inverse system; let $B_\Z$ denote the subspace of finite degree elements in the inverse limit
$\varprojlim B_n$.  This is the \emph{algebra of symmetric functions} in infinitely many variables $\{x_1,x_2,... \}$.  Let $B=B_\Z\otimes_\Z\mathbb{C}$ denote the  \emph{(bosonic) Fock space}.

The algebra $B_\Z$ has many  well-known bases.  Perhaps the nicest is the basis of \emph{Schur functions} (see e.g. \cite{Mac}).  Let $\wp$ denote the set of all partitions, and for $\lambda\in\wp$ let $s_\lambda\in B_\Z$ denote the corresponding Schur function.  
%There are several definitions of $s_\lambda$; for us the most natural is %the following.  Suppose $\lambda$ is a partition of $d$, $n\gg d$.  Then %$\lambda$ can be thought of as a dominant weight of $GL_n(\mathbb{C})$ in %the usual way,  and we define $s_{n,\lambda}$ to be the character of the %complex representation of $GL_n(\mathbb{C})$ of highest weight $\lambda$. % The character $s_{n,\lambda}\in B_n$ and $s_{n,\lambda}\mapsto s_{n-1,\lambda}$ %under the projection described above.  The Schur function is defined as %$s_\lambda=(s_{n,\lambda})\in B_\Z$.  
These form a $\Z$-basis of the algebra of symmetric functions:
$$
B_\Z=\bigoplus_{\lambda\in\wp}\Z s_{\lambda}.
$$  

Let us review some combinatorial notions related to Young diagrams. Firstly, we identify partitions with their Young diagram (using English
notation).  For example the partition $(4,4,2,1)$ corresponds to the
diagram
 $$
 \yng(4,4,2,1)
 $$
The \emph{content} of a box in position $(k,l)$ is the integer $l-k
\in \Z/p\Z$.  Given $\mu,\lambda \in \wp$, we write
$
\xymatrix{ \mu \ar[r] & \lambda}
$
if $\lambda$ can be obtained from $\mu$ by adding some box.  If the
arrow is labelled $i$ then $\lambda$ is obtained from $\mu$ by
adding a box of content $i$ (an $i$-box, for short).  For instance,
if $m=3$, $\mu=(2)$ and $\lambda=(2,1)$ then
$
\xymatrix{
 \mu \ar[r]^{2} & \lambda}.
$
An $i$-box of $\lambda$ is \emph{addable} (resp. \emph{removable})
if it can be added to (resp. removed from) $\lambda$ to obtain
another partition. 
%Let $m_i(\lambda)$ denote the number of $i$-boxes
%of $\lambda$. 
%Define
%\begin{equation}
%\label{milambda}
%n_i(\lambda)=m_{i-1}(\lambda)+m_{i+1}(\lambda)-2m_{i}(\lambda)+\delta_{i0},
%\end{equation}
%where $\delta_{i0}$ is the Kronecker delta.

Of central importance to us is the \emph{Fock space} representation
of $\g$ on $B$ (or $B_\Z$).  The action of $\g$ on $B$ is given by the following formulas:
$
e_i.s_{\lambda}=\sum s_{\mu}
$,
the sum over all $\mu$ such that $\xymatrix{ \mu \ar[r]^{i} &
\lambda}$, and
 $
f_i.s_{\lambda}=\sum s_{\mu}
$,
the sum over all $\mu$ such that $\xymatrix{ \lambda \ar[r]^{i} &
\mu}$.     
Moreover,  $d$ acts on $s_\lambda$ by $m_0(\lambda)$, where $m_0(\lambda)$ is the number of boxes of content zero in $\lambda$.  These equations define an integral  representation of
$\mathfrak{g}$.(see e.g. \cite{LLT}).

Note that $s_{\emptyset}$ is a highest weight vector of highest weight $\Lambda_0$. We note also that the standard basis of $\FF$ is a weight basis.  Let $m_i(\lambda)$ denote the number of $i$-boxes of $\lambda$.  Then $s_\lambda$ is of weight $wt(\lambda)$, where 
\begin{equation}
\label{weight}
wt(\lambda)={\Lambda_0-\sum_i m_i(\lambda) \alpha_i.}
\end{equation}
%Indeed, from the above formulas one obtains:
%$h_i.v_{\lambda}=n_i(\lambda)v_{\lambda}$.

For a $k$-linear abelian category $\mathcal{C}$, let $K_0(\mathcal{C})$ denote the Grothendieck group of $\mathcal{C}$, and let $K(\mathcal{C})$ denote the complexification of $K_0(\mathcal{C})$.  If $A\in\mathcal{C}$ we let $[A]$ denote its image in $K_0(\mathcal{C})$. Simiarly, for an exact functor $F:\mathcal{C}\to\mathcal{C}'$ we let $[F]:K_0(\mathcal{C})\to K_0(\mathcal{C}')$ denote the induced operator on the Grothendieck groups.  Slightly abusing notation, the complexification of $[F]$ is also denoted by $[F]$.

\section{The definition of $\g$-categorification}

``Higher representation theory'' of $\g$ concerns the action of $\g$ on categories rather than on vector spaces.  At the very least, an action of $\g$ on a $k$-linear additive category $\mathcal{C}$ consists of the data of exact endo-functors $E_i$ and $F_i$ on $\mathcal{C}$ (for $i \in \Z/p\Z$), such that $\g$ acts on $K(\mathcal{C})$ via the assignment $e_i \mapsto [E_i]$ and $f_i \mapsto [F_i]$.  For instance, if $i$ and $j$ are not connected in the Dynkin diagram of $\g$ (i.e. $[e_i,f_j]=0$), then we require that $[[E_i],[F_j]]=0$ in $End(K(\mathcal{C}))$.   This is known as a ``weak categorification''.

This notion is qualified as ``weak'' because the relations defining $\g$, such as $[e_i,f_j]=0$, are not lifted to the level of categories.  A stronger notion of categorification would require isomorphisms of functors lifting the relations of $\g$, e.g. functorial isomorphisms $E_i \circ F_j \simeq F_j \circ E_i$.  Moreover, these isomorphisms need to be compatible in a suitable sense.  Making these ideas precise  leads to an enriched theory, which introduces new symmetries coming from an affine Hecke algebra.   

To give the definition of $\g$-categorification we use here,  due to Chuang and Rouquier (a related formulation appears in the works of Khovanov and Lauda \cite{KL}), we first introduce the relevant Hecke algebra. 

\begin{definition}
Let $DH_n$ be the \emph{degenerate affine Hecke algebra} of $GL_n$.  As an abelian group
$$
DH_n=\Z[y_1,...,y_n]\otimes \Z \mathfrak{S}_n.
$$
The algebra structure is defined as follows: $\Z[y_1,...,y_n]$ and $\Z \mathfrak{S}_n$ are subalgebras, and the following relations hold between the generators of these subalgebras:
$$
\tau_iy_j=y_j\tau_i \text{ if }|i-j| \geq 1
$$
and
\begin{equation}
\label{HeckeRelation}
\tau_iy_{i+1}-y_{i}\tau_i=1
\end{equation}
(here $\tau_1,...,\tau_{n-1}$ are the simple generators of $\Z \mathfrak{S}_n$).
\end{definition}

\begin{remark}
One can replace Relation (\ref{HeckeRelation}) by \begin{equation}
\label{HeckeRelation'}
\tau_iy_{i}-y_{i+1}\tau_i=1.
\end{equation}
These two presentations are equivalent; the isomorphism is given by: 
$$\tau_i\mapsto\tau_{n-i}, y_i\mapsto y_{n+1-i}. $$ 
\end{remark}

\begin{definition}
\label{DefinitionCategorification}[Definition 5.29 in \cite{R}]
Let $\mathcal{C}$ be an abelian $k$-linear category.  A \emph{$\g$-categorification} on $\mathcal{C}$ is the data of:
\begin{enumerate}
\item An adjoint pair $(E,F)$ of exact functors $\mathcal{C} \rightarrow \mathcal{C},$
\item morphisms of functors $X \in End(E)$ and $T \in End(E^2), $ and
\item a decomposition $\mathcal{C}=\bigoplus_{\omega \in P} \mathcal{C}_{\omega}$.
\end{enumerate}
Let $X^{\circ} \in End(F)$ be the endomorphism of $F$ induced by adjunction.  Then given $a \in k$ let $E_a$ (resp. $F_a$) be the generalized $a$-eigensubfunctor of $X$ (resp. $X^{\circ}$) acting on $E$ (resp. $F$).  We assume that
\begin{enumerate}
\item[4.] $E=\bigoplus_{i \in \Z/p\Z} E_i$,
\item[5.] the action of $\{ [E_i],[F_i]\}_{i \in \Z/p\Z}$ on $K(\mathcal{C})$ gives rise to an integrable representation of $\g,$
\item[6.] for all $i$, $E_i(\mathcal{C}_{\omega}) \subset \mathcal{C}_{\omega+\alpha_i}$ and $F_i(\mathcal{C}_{\omega}) \subset \mathcal{C}_{\omega-\alpha_i}$,
\item[7.] the functor $F$ is isomorphic to the left adjoint of $E$, and
\item[8.] the degenerate affine Hecke algebra $DH_n$ acts on $End(E^n)$ via
\begin{equation}
\label{EqXX}
y_i \mapsto E^{n-i}XE^{i-1} \text{ for }1 \leq i \leq n,
\end{equation}
and
\begin{equation}
\label{EqTT}
\tau_i \mapsto E^{n-i-1}TE^{i-1} \text{ for }1 \leq i \leq n-1.
\end{equation}
\end{enumerate}

\end{definition}
\begin{remark}
\label{Pedantry}
The  definition (cf. Definition 5.29 in \cite{R}) uses Relation (\ref{HeckeRelation}). For our purposes we use Relation (\ref{HeckeRelation'}).  On the representations of the symmetric groups (the main example considered in \cite[Section 3.1.2]{CR}) another variant of Relation (\ref{HeckeRelation'}) is used .  
\end{remark}
\begin{remark}
To clarify notation, the natural endomorphism $y_i$ of $E^n$ assigns to $M \in \mathcal{C}$ an endomorphism of $E^n(M)$ as follows: apply the functor $E^{n-i}$ to the morphism $$X_{E^{i-1}(M)}:E^i(M) \rightarrow E^i(M).$$
\end{remark}

The functorial isomorphisms lifting the defining relations of $\g$ are constructed from the data of $\g$-categorification.  More precisely, the adjunctions between $E$ and $F$ and the functorial morphisms $X$ and $T$ are introduced precisely for this purpose.  The action of $DH_n$ on $End(E^n)$ in part (8) of Definition \ref{DefinitionCategorification} is needed in order to express the compatibility between the functorial isomorphisms.  See \cite{R} for details.    

%\begin{remark}
%By construction $(E_i,F_i)$ is an adjoint pair of functors for every $i %\in \OO_{\ff}$.  Therefore we have the data of two morphisms $\eta_i:Id\rightarrow %F_iE_i$ and $\varepsilon_i:E_iF_i \rightarrow Id$ such that
%\begin{eqnarray*}
%(\varepsilon_iId)\circ(Id\eta_i)=Id \\ (Id\varepsilon_i)\circ(\eta_iId)=Id
%\end{eqnarray*}
% \end{remark}

\section{Polynomial functors}

\subsection{The category $\PP$}
Our main goal in this paper is to define a $\g$-categorification on the category $\PP$ of \emph{strict polynomial functors of finite degree}, and show that this categorifies the Fock space representation of $\g$. In this section we define the category $\PP$ and recall some of its basic features.   

Let $\V$ denote the category of finite dimensional vector spaces over $k$.  For $V,W \in \V$, \emph{polynomial maps} from $V$ to $W$ are by definition elements of $S(V^{*})\otimes W$, where $S(V^*)$ denotes the symmetric algebra of the linear dual of $V$.  Elements of $S^{d}(V^{*})\otimes W$ are said to be \emph{homogeneous} of degree $d$.
\begin{definition}
  The objects of the category $\PP$ are functors $M:\V \rightarrow \V$ that satisfy the following properties:
\begin{enumerate}
\item for any $V,W \in \V$, the map of vector spaces
$$
Hom_{k}(V,W) \rightarrow Hom_{k}(M(V),M(W))
$$
is polynomial, and
\item the degree of the map 
$$
End_{k}(V) \rightarrow End_{k}(M(V))
$$
is bounded with respect to $V \in \V$.
\end{enumerate}
The morphisms in $\PP$ are natural transformations of functors.  For $M \in \PP$ we denote by $1_M\in Hom_{\PP}(M,M)$ the identity natural transformation. \end{definition}

Let $I \in \PP$ be the identity functor from $\V$ to $\V$ and let $k\in \PP$ denote the constant functor with value $k$. 
Tensor products in $\V$ define a symmetric  monoidal structure $\otimes$ on $\PP$, with unit $k$. The category $\PP$ is abelian.

Let $M \in \PP$ and $V \in \V$.  By functoriality $M(V)$ carries a polynomial action of the linear algebraic group $GL(V)$.  We denote this representation by $\pi_{M,V}$, or by $\pi$ when the context is clear:
$$\pi_{M,V}:GL(V)\to GL(M(V))\;.$$ 
Similarly, a morphism $\phi:M\to N$ induces a $GL(V)$-equivariant map $\phi_ V:M(V)\to N(V)$. Thus, evaluation on $V$ yields a functor from
$\PP$ to  $Pol(GL(V))$, the category of polynomial representations of $GL(V)$.

\subsection{Degrees and weight spaces}

The \emph{degree} of a functor $M\in\PP$ is the upper bound of the degrees of the polynomials $End_k(V)\to End_k(M(V))$ for $V\in\V$. For example, the functors of degree zero are precisely the functors $\V\to\V$ which are isomorphic to constant functors. 
A functor $M\in\PP$ is \emph{homogeneous of degree $d$} if all the polynomials $End_k(V)\to End_k(M(V))$ are homogeneous polynomials of degree $d$.
 
For $M\in\PP$, $GL(k)$ acts on $M(V)$ by the formula, 
$$\lambda\cdot m=\pi_{M,V}(\lambda 1_V)(m), \text{ for $\lambda\in GL(k)$ and $m\in M(V)$}\;.$$ 
This action is a polynomial action of $GL(k)$, so $M(V)$ splits as a direct sum of weight spaces
$$M(V)=\bigoplus_{d\ge 0}M(V)_d;, $$
where
$$M(V)_d=\{m\in M(V) : \lambda\cdot m = \lambda^dm  \}\;.$$
Moreover, if $f:V\to W$ is a linear map, it commutes with homotheties, so $M(f)$ is $GL(k)$-equivariant. Hence $M(f)$ preserves weight spaces, and we denote by $M(f)_d$ its restriction to the $d$-th weight spaces.

So we can define a strict polynomial functor $M_d$ by letting 
$$M_d(V)= M(V)_d\;, M_d(f)=M(f)_d;.$$
A routine check shows that $M_d$ is homogeneous of degree $d$.
Thus, any functor $M$ decomposes as a finite direct sum of homogeneous functors $M_d$ of degree $d$. 
Similarly, a morphism $\phi:M\to N$ between strict polynomial functors preserves weight spaces. So it decomposes as a direct sum of morphisms of homogeneous functors $\phi_d:M_d\to N_d$. This can be formulated by saying that the category $\PP$ is the direct sum of its subcategories $\PP_d$ of homogeneous functors of degree $d$:
\begin{equation}
\label{Pdecomp}
\PP=\bigoplus_{d\ge 0} \PP_d\;.
\end{equation} 

If $M\in\PP$, we define its \emph{Kuhn dual} $M^\sharp\in\PP$ by  $M^\sharp(V)=M(V^*)^*$, where `$*$' refers to $k$-linear duality in the category of vector spaces. Since $(M^{\sharp})^\sharp\simeq M$, duality yields an equivalence of categories \cite[Prop 2.6]{FS}:
$$^\sharp:\PP\xrightarrow[]{\simeq} \PP^{\mathrm{op}}\;.$$
A routine check shows that $^\sharp$ respect degrees, i.e. $M^\sharp$ is homogeneous of degree $d$ if and only if $M$ also is.

The following theorem, due to Friedlander and Suslin \cite{FS}, shows  the categories $\PP_d$ are a model for the stable categories of homogeneous polynomial $GL_n(k)$-modules of degree $d$.  Let $Pol_d(GL(V))$ denote the category of polynomial representations of $GL(V)$ of degree $d$.
\begin{theorem}\label{thm-FS}
Let $V\in\V$ be a $k$-vector space of dimension $n\ge d$. The functor induced by evaluation on $V$:
$$\PP_d\to Pol_d(GL(V)),$$
is an equivalence of categories.
\end{theorem}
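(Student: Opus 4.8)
The plan is to realize both $\PP_d$ and $Pol_d(GL(V))$ as the category of finite-dimensional modules over the Schur algebra $S(n,d)$, and to check that the evaluation functor is, up to natural isomorphism, the standard equivalence between these descriptions. The key object on the functor side is the representable $\Gamma^{d,V}:=\Gamma^d\bigl(\mathrm{Hom}_k(V,-)\bigr)\in\PP_d$ (composing the linear functor $W\mapsto\mathrm{Hom}_k(V,W)=V^\ast\otimes W$ with the $d$-th divided power). By the Yoneda-type lemma of Friedlander--Suslin \cite{FS}, there is a natural isomorphism $\mathrm{Hom}_{\PP_d}(\Gamma^{d,V},M)\cong M(V)$ for all $M\in\PP_d$; in particular $\mathrm{ev}_V=\mathrm{Hom}_{\PP_d}(\Gamma^{d,V},-)$ is exact, so each $\Gamma^{d,V}$ is projective, and the family $\{\Gamma^{d,V}\}_{V\in\V}$ is a projective generating set of $\PP_d$.

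First I would show that a single object suffices when $n=\dim V\ge d$: namely $\Gamma^{d,k^n}$ is a projective generator of $\PP_d$. Using the exponential isomorphism $\Gamma^d(U\oplus U')\cong\bigoplus_{i+j=d}\Gamma^i(U)\otimes\Gamma^j(U')$ iterated, one decomposes, for any $m$, $\Gamma^{d,k^m}(W)=\Gamma^d\bigl(W^{\oplus m}\bigr)\cong\bigoplus_{\underline a}\Gamma^{a_1}(W)\otimes\cdots\otimes\Gamma^{a_m}(W)$, the sum over compositions $\underline a$ of $d$ into $m$ parts. Each summand involves at most $d$ nontrivial tensor factors (the positive parts of $\underline a$), hence is functorially a direct summand of $\Gamma^d(W^{\oplus d})=\Gamma^{d,k^d}(W)$; and since $k^d$ is a retract of $k^n$, the contravariant functor $\mathrm{Hom}_k(-,W)$ makes $\mathrm{Hom}_k(k^d,W)$ a natural retract of $\mathrm{Hom}_k(k^n,W)$, so $\Gamma^{d,k^d}$ is a direct summand of $\Gamma^{d,k^n}$. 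Therefore every $\Gamma^{d,k^m}$, and hence every object of $\PP_d$, is a quotient of a finite direct sum of copies of $\Gamma^{d,k^n}$.

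With that in hand, the projective-generator theorem gives an equivalence $\PP_d\xrightarrow{\;\simeq\;}\mathrm{mod}\text{-}A$, $M\mapsto\mathrm{Hom}_{\PP_d}(\Gamma^{d,k^n},M)$, where $A=\mathrm{End}_{\PP_d}(\Gamma^{d,k^n})$ and $\mathrm{mod}\text{-}A$ denotes finite-dimensional right $A$-modules (finiteness is automatic, as $M(k^n)$ is finite-dimensional). By the Yoneda isomorphism $A\cong\Gamma^{d,k^n}(k^n)=\Gamma^d\bigl(\mathrm{End}_k(k^n)\bigr)$, which is precisely the Schur algebra $S(n,d)$, and the equivalence is naturally isomorphic to $M\mapsto M(k^n)$. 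On the other side, the classical theory of polynomial representations identifies $Pol_d(GL(V))$ with $\mathrm{mod}\text{-}S(n,d)$, the $GL(V)$-action on a module being recovered through the multiplicative map $GL(V)\to S(n,d)^{\times}$. It remains to observe that these two identifications are compatible with evaluation: the action of $A=S(n,d)$ on $\mathrm{Hom}_{\PP_d}(\Gamma^{d,k^n},M)$ by precomposition corresponds, under $\mathrm{ev}_{k^n}$, to the natural $\Gamma^d(\mathrm{End}_k k^n)$-action on $M(k^n)$, whose restriction to $GL(V)\subset S(n,d)^{\times}$ is exactly $\pi_{M,V}$. Hence evaluation $\PP_d\to Pol_d(GL(V))$ is a composite of two equivalences, so it is an equivalence.

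The step I expect to be the real obstacle is this last compatibility together with the classical input underlying it: one must genuinely know that a homogeneous degree-$d$ polynomial representation of $GL(V)$ is the same datum as a comodule over the coalgebra $S^d(\mathrm{End}_k V)^\ast\cong\Gamma^d(\mathrm{End}_k V)$ of degree-$d$ polynomial functions on $\mathrm{End}_k V$, i.e.\ a module over $S(n,d)$, and that this is compatible with the endomorphism ring computed inside $\PP_d$. This is where the hypothesis $n\ge d$ is essential — it guarantees that the Schur algebra seen through polynomial functors coincides with the one seen through $GL_n$-modules, so that no information is lost on vector spaces of dimension exceeding $n$; granting it, the functor-theoretic half reduces to the clean Morita argument above.
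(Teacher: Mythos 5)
Your proof is correct and is essentially the argument of Friedlander--Suslin, which the paper does not reproduce but simply cites: the representable functors $\Gamma^{d,V}$ with their Yoneda property, the reduction via the exponential formula to the single projective generator $\Gamma^{d,k^n}$ for $n\ge d$, and the Morita equivalence with $\mathrm{End}_{\PP_d}(\Gamma^{d,k^n})\cong\Gamma^d(\mathrm{End}_k(k^n))=S(n,d)$ identified with Green's description of $Pol_d(GL(V))$ are exactly the ingredients of the proof in \cite{FS}. One small correction of emphasis: the hypothesis $n\ge d$ is needed precisely where you use it in your second paragraph (to make $\Gamma^{d,k^n}$ a generator --- evaluation on spaces of dimension less than $d$ is not faithful, e.g.\ it kills $\Lambda^d$), not to reconcile the two Schur algebras, which coincide for every $n$ by the Yoneda isomorphism.
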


As a consequence of Theorem \ref{thm-FS}, we obtain that strict polynomial functors are noetherian objects.
\begin{corollary}\label{cor-finite-filtration}
Let $M\in\PP$. Assume that you have an increasing sequence of subfunctors of $M$:
$$M^0\subset M^1\subset \dots\subset M^i\subset\dots \;.$$
Then there exists an integer $N$ such that for all $n\ge N$, $M^n=M^N$.
\end{corollary}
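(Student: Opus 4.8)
The plan is to reduce the statement about the (possibly infinite-degree) functor $M$ to the homogeneous pieces, where Theorem~\ref{thm-FS} applies, and then invoke the fact that polynomial representations of $GL_n(k)$ form a noetherian category. First I would use the decomposition \eqref{Pdecomp}: write $M=\bigoplus_{d\ge 0}M_d$ with $M_d\in\PP_d$ homogeneous of degree $d$. A priori this sum is infinite, so the first key step is to observe that $M$ has \emph{finite degree} by definition of $\PP$, hence $M_d=0$ for all $d$ larger than some bound $D$, and the sum is actually finite: $M=\bigoplus_{d=0}^{D}M_d$.

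Next, given the increasing sequence $M^0\subset M^1\subset\cdots$ of subfunctors of $M$, I would intersect each $M^i$ with the summands: since a morphism (in particular an inclusion of subfunctors) preserves weight spaces, each $M^i$ decomposes as $M^i=\bigoplus_{d=0}^{D}(M^i\cap M_d)=\bigoplus_{d=0}^{D}(M^i)_d$, and $(M^0)_d\subset (M^1)_d\subset\cdots$ is an increasing sequence of subfunctors of $M_d$ in $\PP_d$. It therefore suffices to prove the stabilization statement for each homogeneous $M_d$ separately, because a finite direct sum of eventually-constant sequences is eventually constant (take $N$ to be the maximum of the finitely many bounds $N_d$).

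For a fixed homogeneous degree $d$, I would pick $V\in\V$ of dimension $n\ge d$ and apply Theorem~\ref{thm-FS}: evaluation on $V$ is an equivalence $\PP_d\xrightarrow{\sim}Pol_d(GL(V))$. Under an equivalence of abelian categories, subobjects correspond to subobjects, so an increasing chain of subfunctors of $M_d$ maps to an increasing chain of $GL(V)$-subrepresentations of the finite-dimensional vector space $M_d(V)$. Such a chain of subspaces of a finite-dimensional space must stabilize (the dimensions are bounded by $\dim_k M_d(V)$), and pulling back through the equivalence gives the required $N_d$ for $M_d$. Combining over $d=0,\dots,D$ finishes the proof.

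The only point requiring any care — and the place I would expect a referee to look — is the reduction to the homogeneous case, i.e.\ checking that the decomposition into weight spaces is compatible with arbitrary subfunctors and that the relevant degree bound from the definition of $\PP$ genuinely makes the sum $\bigoplus_d M_d$ finite; both are essentially bookkeeping already carried out in the preceding subsection, so there is no real obstacle. Everything else is immediate from Theorem~\ref{thm-FS} and finite-dimensionality.
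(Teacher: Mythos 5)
Your argument is correct and is exactly the route the paper intends (it states the corollary as an immediate consequence of Theorem~\ref{thm-FS} without writing out the details): reduce to the homogeneous summands $M_d$, which are finite in number by the degree bound, and evaluate on $V$ with $\dim V\ge d$ so the chain becomes a chain of subrepresentations of the finite-dimensional space $M_d(V)$, which must stabilize. No gaps; the bookkeeping about weight spaces and subfunctors is precisely what the preceding subsection provides.
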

%\begin{proof}
%The statement holds for polynomial $GL(V)$-modules.
%\end{proof}

%The following remark provides an interpretation of the category $\PP_d$.
\begin{remark}\label{rk-tensors}
Let $\otimes^d$ denote the $d$-th tensor product functor, which sends $V\in\V$ to $V^{\otimes d}\in\V$. Then $\otimes^d\in\PP_d$.
Let $\lambda$ be a tuple of nonnegative integers summing to $d$, and let $\mathfrak{S}_\lambda\subset \mathfrak{S}_d$ denote the associated Young subgroup. 
We denote by $\Gamma^\lambda$ the subfunctor of $\otimes^d$ defined by $\Gamma^{\lambda}(V)=(V^{\otimes d})^{\mathfrak{S}_\lambda}$. 
It is proved in \cite{FS} that the functors  $\Gamma^\lambda$, $\lambda\in\wp$ a partition of $d$, form a (projective) generator of $\PP_d$.
In other words, the objects $M\in\PP_d$ are exactly the functors $M:\V\to\V$ which can be obtained as subquotients of a direct sum of a finite number of copies of the $d$-th tensor product functor $\otimes^d$. 
\end{remark}

\subsection{Recollections of Schur and Weyl functors}
In this section we introduce Schur functors and Weyl functors. These strict polynomial functors are the functorial version of the Schur modules and the Weyl modules, and they were first defined in \cite{ABW}. 

Let $\wp_d$ denote the partitions of $d$. For $\lambda\in\wp_d$ let $\lambda^{\circ}$ denote the conjugate partition. We define a map $d_\lambda$ as the composite:
$$d_\lambda\,:\;\Lambda^{\lambda^{\circ}_1}\otimes\cdots\otimes \Lambda^{\lambda^{\circ}_n}\hookrightarrow \otimes^d \xrightarrow[]{\sigma_\lambda}\otimes^d\twoheadrightarrow S^{\lambda_1}\otimes\cdots\otimes S^{\lambda_m}\;. $$
Here the first map is the canonical inclusion and the last one is the canonical epimorphism. The middle map is the isomorphism of $\otimes^d$ which maps $v_1\otimes\cdots v_d$ onto $v_{\sigma_{\lambda}(1)}\otimes\cdots\otimes v_{\sigma_{\lambda}(d)}$, where $\sigma_\lambda\in\mathfrak{S}_d$ is the permutation defined as follows.  Let $t_\lambda$ be the  Young tableaux with standard filling: $1,...,\lambda_1$ in the first row, $\lambda_1+1,...,\lambda_2$ in the second row, and so forth.  Then $\sigma_\lambda$, in one-line notation, is the row-reading of the conjugate tableaux $t^{\circ}_\lambda$.  For example, if $\lambda=(3,1)$, then, $\sigma_{\lambda}=1423$, the permutation mapping $1\mapsto 1, 2\mapsto 4, 3\mapsto 2$, and $4\mapsto 3$.  
\begin{definition}
Let $\lambda \in \wp_d$.
\begin{enumerate}
\item The \emph{Schur functor} $S_\lambda\in\PP_d$ is the image of $d_\lambda$. 
\item The \emph{Weyl functor} $W_\lambda$ is defined by duality $W_\lambda:=S_\lambda^\sharp$.   
\item Let $L_\lambda$ be the socle of the functor $S_\lambda$.
 \end{enumerate}
\end{definition}

\begin{remark}
In \cite[def. II.1.3]{ABW}, Schur functors are defined in the more general setting of `skew partitions' $\lambda/\alpha$, (i.e. pairs of partitions $(\lambda,\alpha)$ with $\alpha\subset \lambda$), and over arbitrary commutative rings. They denote  Schur functors by $L_{\lambda^\circ}$, but we prefer to reserve this notation for simple objects in $\PP_d$. 
\end{remark}

The following statement makes the link between Schur functors and induced modules (also called costandard modules, or Schur modules)  and between Weyl functors and Weyl modules (also called standard modules or Verma modules).

\begin{proposition}\label{prop-corr} Let $\lambda\in\wp_d$.
\begin{itemize}
\item[(i)] There is an isomorphism of $GL(k^n)$-modules $S_\lambda(k^n)\simeq H^0(\lambda)$, where $H^0(\lambda)=ind_{B^-}^{GL(k^n)}(k^\lambda)$ is the induced module from \cite[II.2]{Jantzen}.
\item[(ii)] There is an isomorphism of $GL(k^n)$-modules $W_\lambda(k^n)\simeq V(\lambda)$, where $$V(\lambda)=H^0(-w_0\lambda)^*$$ is the Weyl module from \cite[II.2]{Jantzen}.
\end{itemize}
\end{proposition}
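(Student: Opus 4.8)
The plan is to reduce the statement to known facts about the representation theory of $GL_n$ via Theorem~\ref{thm-FS}, after unwinding the functorial definition of $d_\lambda$ into the classical construction of Schur and Weyl modules. Concretely, I would first observe that it suffices to treat case (i), since (ii) follows from (i) by applying Kuhn duality: we have $W_\lambda = S_\lambda^\sharp$, so $W_\lambda(k^n) = S_\lambda((k^n)^*)^* \simeq H^0(\lambda)^*$ as a $GL(k^n)$-module, where the action on the right-hand side is twisted by the dual; a short computation with the longest Weyl group element $w_0$ identifies $H^0(\lambda)^*$ with $V(\lambda) = H^0(-w_0\lambda)^*$ in Jantzen's normalization, using that $S_\lambda((k^n)^*)$ carries the $GL(k^n)$-action coming from the identification $GL(k^n)\cong GL((k^n)^*)$ via $g\mapsto (g^{-1})^t$.

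For (i) itself, the key step is to recognize the composite $d_\lambda$ evaluated on $V = k^n$ as precisely the map whose image is the Schur module in the sense of \cite{ABW}. Namely, $d_\lambda(k^n)$ is the map
$$
\Lambda^{\lambda^\circ_1}(k^n)\otimes\cdots\otimes\Lambda^{\lambda^\circ_n}(k^n)\hookrightarrow (k^n)^{\otimes d}\xrightarrow{\sigma_\lambda}(k^n)^{\otimes d}\twoheadrightarrow S^{\lambda_1}(k^n)\otimes\cdots\otimes S^{\lambda_m}(k^n),
$$
and after unraveling the permutation $\sigma_\lambda$ (which is built to match columns of $t_\lambda^\circ$ with the factors) this is exactly the standard map defining the Schur functor $L_{\lambda^\circ}$ of \cite[Def. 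II.1.3]{ABW} applied to the free module $k^n$. Its image is therefore the Schur module $L_{\lambda^\circ}(k^n)$ in their notation, which is naturally a $GL(k^n)$-submodule (indeed subquotient) of $(k^n)^{\otimes d}$ compatibly with the functorial $GL(k^n)$-action from $\pi_{S_\lambda, k^n}$. I would then cite the classical identification (see \cite{ABW}, or \cite[II.2]{Jantzen} together with the Carter--Lusztig / Akin--Buchsbaum--Weyman comparison) of this Schur module with the induced module $H^0(\lambda) = \mathrm{ind}_{B^-}^{GL(k^n)}(k^\lambda)$, valid over an arbitrary infinite field and for $n\geq d$ (so that $\lambda$ has at most $n$ rows). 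This last comparison is really the heart of the matter and is where I expect to lean most heavily on the literature rather than reprove anything: matching the explicit "determinant times permanent" presentation of the Schur module with the geometric induced module requires the standard basis theorem for $H^0(\lambda)$ (bideterminants / standard tableaux) and a highest-weight vector argument.

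The main obstacle, then, is not the functoriality — that is handled once and for all by Theorem~\ref{thm-FS}, which guarantees that the $GL(k^n)$-module $S_\lambda(k^n)$ only depends on $\lambda$ and not on $n$ as long as $n\geq d$, and that all the maps in sight are $GL(k^n)$-equivariant — but rather pinning down the precise conventions so that the permutation $\sigma_\lambda$ defined via row-reading of $t_\lambda^\circ$ genuinely produces the Schur map of \cite{ABW} and not, say, its conjugate or a transpose. I would devote the bulk of the argument to checking this on the level of the combinatorics of tableaux (the $\lambda=(3,1)$ example in the text, where $\sigma_\lambda = 1423$, is the sanity check to generalize), and then simply invoke \cite[II.1, II.2]{ABW} and \cite[II.2]{Jantzen} for the remaining identifications. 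A brief remark would note that everything is compatible with base change and that the hypothesis $n\geq d$ is exactly what makes $\wp_d$-indexed data stabilize, matching the statement of Proposition~\ref{prop-corr}.
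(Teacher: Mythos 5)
Your proposal is correct and follows essentially the same route as the paper: deduce (ii) from (i) via Kuhn duality becoming transpose duality under evaluation, and prove (i) by recognizing $d_\lambda(k^n)$ as the classical Schur-module construction and citing the literature for its identification with $H^0(\lambda)$ (the paper pins this down via Martin's module $M(\lambda)$, James's theorem, and \cite[Thm II.2.16]{ABW}, where you instead invoke \cite{ABW} and the Carter--Lusztig comparison directly). The only minor quibble is that your hypothesis $n\ge d$ is not needed for the statement (both sides vanish, or the classical identification still applies, whenever $\lambda$ has more than, or at most, $n$ rows respectively), and Theorem~\ref{thm-FS} plays no real role in the paper's argument for this proposition.
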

\begin{proof}
We observe that (ii) follows from (i). Indeed, we know that $V(\lambda)$ is the transpose dual of $H^0(\lambda)$, and evaluation on $k^n$ changes the duality $^\sharp$ in $\PP$ into the transpose duality. To prove (ii), we refer to \cite{Martin}. The Schur module $M(\lambda)$ defined in \cite[Def 3.2.1]{Martin} is isomorphic to $H^0(\lambda)$ (this is a theorem of James, cf. \cite[Thm 3.2.6]{Martin}). Now, using the embedding of $M(\lambda)$ into $S^{\lambda_1}(k^n)\otimes\cdots \otimes S^{\lambda_m}(k^n)$ of \cite[Example (1) p.73]{Martin}, and \cite[Thm II.2.16]{ABW}, we get an isomorphism $S_\lambda(k^n)\simeq M(\lambda)$. 
\end{proof}

The following portemanteau theorem collects some of the most important properties of the functors $S_\lambda$, $W_\lambda$, $L_\lambda$, $\lambda\in\wp_d$.

\begin{theorem}\label{portemanteau}

\begin{enumerate}
\item[(i)] The functors $L_\lambda$, $\lambda\in\wp_d$ form a complete set of representatives for the isomorphism classes of irreducible functors of $\PP_d$.
\item[(ii)] Irreducible functors are self dual: for all $\lambda\in\wp_d$, $L_\lambda^\sharp\simeq L_\lambda$.
\item[(iii)] For all $\lambda\in\wp_d$, the $L_\mu$ which appear as composition factors in $S_\lambda$ satisfy $\mu\le \lambda$, where $\le$ denotes the lexicographic order. Moreover, the multiplicity of $L_\lambda$ in $S_\lambda$ is one.
\item[(iv)] For all $\lambda,\mu\in\wp_d$, 
$$Ext^i_\PP(W_\mu,S_\lambda)= \left\lbrace
{\begin{array}{l}
k\text{ if $\lambda=\mu$ and $i=0$,}\\
0\text{ otherwise.}
\end{array}
}\right.$$
\end{enumerate}
\end{theorem}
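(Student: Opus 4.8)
The plan is to recognize this ``portemanteau'' theorem as a collection of standard facts about highest weight categories, and to reduce everything to the dictionary between $\PP_d$ and the category of polynomial $GL_n(k)$-modules of degree $d$ provided by Theorem~\ref{thm-FS}, together with Proposition~\ref{prop-corr}. Under evaluation on $k^n$ with $n\ge d$, the functor $S_\lambda$ corresponds to the induced (costandard) module $H^0(\lambda)$ and $W_\lambda$ to the Weyl (standard) module $V(\lambda)$. The category $Pol_d(GL_n(k))$ is a highest weight category (a consequence of the theory in \cite{Jantzen}, since it is a ``saturated'' subcategory of rational $GL_n$-modules cut out by the dominant weights that are partitions of $d$), with standard objects $V(\lambda)$ and costandard objects $H^0(\lambda)$ indexed by $\wp_d$. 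Parts (i), (iii), (iv) are then general highest weight category statements; only (ii) uses something slightly extra.

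For (i): in a highest weight category the simple objects are exactly the socles (equivalently the heads, after the appropriate duality) of the costandard (resp. standard) objects, indexed by the same poset. Since $L_\lambda$ is defined as the socle of $S_\lambda$, and $S_\lambda \leftrightarrow H^0(\lambda)$ has simple socle $L(\lambda)$, we get that $\{L_\lambda\}_{\lambda\in\wp_d}$ is a complete irredundant list of simples of $\PP_d$; irredundancy follows because the $H^0(\lambda)$ have pairwise non-isomorphic socles, a standard fact for induced modules of a reductive group (highest weight $\lambda$ occurs in $H^0(\lambda)$ with multiplicity one and is the highest weight present). For (iii): the composition factors $L(\mu)$ of $H^0(\lambda)$ satisfy $\mu\le\lambda$ in the dominance order, hence a fortiori in the lexicographic order (which refines dominance on partitions of a fixed $d$); and $L(\lambda)$ occurs with multiplicity one — this is again the standard weight argument, since the $\lambda$-weight space of $H^0(\lambda)$ is one-dimensional. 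For (iv): the identity $\dim\operatorname{Ext}^i(V(\mu),H^0(\lambda))=\delta_{i,0}\delta_{\lambda,\mu}$ is the defining cohomological orthogonality of standard against costandard objects in a highest weight category (see \cite{Jantzen}, II.4, for the $GL_n$ case); one transports it along the equivalence of Theorem~\ref{thm-FS}, noting that Ext groups computed in $\PP_d$ agree with those computed in $Pol_d(GL_n(k))$ for $n\ge d$ since the equivalence is exact.

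For (ii), self-duality of the simples: the Kuhn dual $^\sharp$ is a contravariant exact equivalence of $\PP_d$ with itself (it squares to the identity and preserves degree), so it permutes the simple objects; it remains to identify the permutation. Since $S_\lambda^\sharp = W_\lambda$ by definition, $^\sharp$ sends the costandard object $S_\lambda$ to the standard object $W_\lambda$, i.e. it interchanges the two orders of the highest weight structure while fixing the indexing poset $\wp_d$. Applying $^\sharp$ to the inclusion $L_\lambda = \operatorname{soc}(S_\lambda)\hookrightarrow S_\lambda$ gives a surjection $W_\lambda \twoheadrightarrow L_\lambda^\sharp$, so $L_\lambda^\sharp$ is the head of $W_\lambda$, which is the simple object $L(\lambda)$ — the same simple whose socle-of-$S_\lambda$ description gave $L_\lambda$. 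Hence $L_\lambda^\sharp\simeq L_\lambda$. Equivalently, one can invoke the compatibility of $^\sharp$ with the transpose duality on $GL_n$-modules noted in the proof of Proposition~\ref{prop-corr}: the transpose dual fixes every simple $GL_n$-module (its highest weight is preserved), hence $^\sharp$ fixes every $L_\lambda$.

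\textbf{Main obstacle.} None of the four parts is genuinely hard; the one point requiring care is the bookkeeping that makes ``$\PP_d$ is a highest weight category with standard objects $W_\lambda$ and costandard objects $S_\lambda$'' precise and legitimate — in particular checking that $Pol_d(GL_n(k))$ for $n\ge d$ really is a finite highest weight category (a ``truncation'' of the rational representation category to the partition-weights, which is a poset ideal once $n$ is large enough so that no smaller dominant weight is missed), and that Ext-groups and simple objects do not change as $n$ grows past $d$. Once that foundational statement is in hand, (i), (iii), (iv) are immediate citations to \cite{Jantzen}, and (ii) is the short duality argument above.
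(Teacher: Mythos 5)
Your proposal is correct and follows essentially the same route as the paper: both reduce everything to classical $GL_n$-module theory via Proposition \ref{prop-corr} and the evaluation equivalence of Theorem \ref{thm-FS}, and then quote standard facts (the paper cites Martin's book for (i)--(iii) and Jantzen together with \cite[Cor.~3.13]{FS} for (iv), the latter playing exactly the role of your saturation/Ext-comparison remark). The only difference is packaging: you phrase the standard facts in highest-weight-category language and supply a short Kuhn-dual/transpose-duality argument for (ii) where the paper simply cites \cite[Thm.~3.4.9]{Martin}.
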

\begin{proof}
All these statements have functorial proofs, but for sake of brevity we shall use proposition \ref{prop-corr}, together with the fact that evaluation on $V$ for $\dim V\ge d$ is an equivalence of categories. Thus, (i) follows from \cite[Thm. 3.4.2]{Martin}, (ii) follows from \cite[Thm. 3.4.9]{Martin}, (iii) follows from \cite[Thm. 3.4.1(iii)]{Martin}. Finally, (iv) follows from \cite[Prop. 4.13]{Jantzen} and \cite[Cor. 3.13]{FS}.
\end{proof}

\begin{corollary}
\label{LemmaKgrp}
The equivalence classes of the Weyl functors $[W_\lambda]$ for $\lambda\in\wp$ form a basis of $K(\PP)$.
\end{corollary}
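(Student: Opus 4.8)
The plan is to show that $\{[W_\lambda]\}_{\lambda\in\wp}$ is a $\Z$-basis of $K_0(\PP)$ (hence, after complexification, a $\C$-basis of $K(\PP)$), and to do this degree by degree using the decomposition $\PP=\bigoplus_{d\ge 0}\PP_d$ from \eqref{Pdecomp}. This reduces the claim to: for each $d$, the classes $[W_\lambda]$ with $\lambda\in\wp_d$ form a basis of $K_0(\PP_d)$. Since $\PP_d$ is equivalent (by Theorem \ref{thm-FS}) to $Pol_d(GL(k^n))$ for $n\ge d$, which is a finite-length abelian category, its Grothendieck group is free with basis the classes of the simple objects; by Theorem \ref{portemanteau}(i) these are exactly the $[L_\lambda]$, $\lambda\in\wp_d$. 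So it suffices to prove that the transition matrix between $\{[W_\lambda]\}$ and $\{[L_\lambda]\}$ (indexed by $\wp_d$) is invertible over $\Z$.

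First I would record that $[S_\lambda]=[W_\lambda]$ in $K_0(\PP_d)$: indeed $W_\lambda=S_\lambda^\sharp$, Kuhn duality $^\sharp$ is exact and preserves degree, so it induces an involution on $K_0(\PP_d)$; combined with Theorem \ref{portemanteau}(ii) that $L_\mu^\sharp\simeq L_\mu$, applying $^\sharp$ to a composition series of $S_\lambda$ shows $S_\lambda$ and $W_\lambda$ have the same composition factors with the same multiplicities, whence $[W_\lambda]=[S_\lambda]$. Then Theorem \ref{portemanteau}(iii) tells us
$$[W_\lambda]=[S_\lambda]=[L_\lambda]+\sum_{\mu<\lambda}c_{\lambda\mu}[L_\mu]$$
with $c_{\lambda\mu}\in\Z_{\ge0}$ and $<$ the lexicographic order on $\wp_d$. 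Ordering $\wp_d$ by any total order refining the lexicographic order, the transition matrix from $\{[L_\lambda]\}$ to $\{[W_\lambda]\}$ is unitriangular, hence invertible over $\Z$. Therefore $\{[W_\lambda]\}_{\lambda\in\wp_d}$ is also a $\Z$-basis of $K_0(\PP_d)$.

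Finally I would assemble the pieces: $K_0(\PP)=\bigoplus_{d\ge0}K_0(\PP_d)$, and a functor $W_\lambda$ with $\lambda\in\wp_d$ lies in $\PP_d$, so the full family $\{[W_\lambda]\}_{\lambda\in\wp}$ is the union over $d$ of $\Z$-bases of the summands, hence a $\Z$-basis of $K_0(\PP)$; tensoring with $\C$ gives the claimed $\C$-basis of $K(\PP)$. I do not expect any serious obstacle here — everything is a formal consequence of the results already collected in Theorem \ref{thm-FS} and Theorem \ref{portemanteau}. The only mild subtlety worth stating carefully is that $\PP_d$ has finite length (so that $K_0$ is free on the simples and the infinite sums over $\mu<\lambda$ are in fact finite), which follows from Theorem \ref{thm-FS} together with the fact that $Pol_d(GL(k^n))$ is a finite-length category (equivalently, from Corollary \ref{cor-finite-filtration}).
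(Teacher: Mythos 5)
Your argument is correct and is essentially the paper's own proof: both rest on the unitriangular (with respect to lexicographic order) transition matrix between $\{[W_\lambda]\}$ and the basis of simples $\{[L_\lambda]\}$, obtained from Theorem \ref{portemanteau}(ii)--(iii) via Kuhn duality. You merely make explicit the degree-wise reduction to $\PP_d$ and the finite-length/simple-basis facts that the paper's proof leaves implicit.
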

\begin{proof}
Order $\wp$ by the lexicographic order, denoted $\leq$.  By parts (ii) and (iii) of Theorem \ref{portemanteau}, the multiplicity of $L_{\lambda}$ in $W_{\lambda}$ is one, and all other simple objects appearing as composition factors in $W_\lambda$ are isomorphic to $L_\mu$, where $\mu\le \lambda$.  Form the matrix of the map given by $[L_\lambda]\mapsto [W_\lambda]$ in the basis $[L_\lambda]_{\lambda\in\wp}$ (ordered by $\leq$). This is a lower triangular matrix, with $1$'s on the diagonal. Hence it is invertible and we obtain the result.
\end{proof}

\begin{corollary}\label{lm-dual}
The map $K(\PP)\to K(\PP)$ given by $[M]\mapsto [M^\sharp]$ is the identity.
In particular, for all $\lambda\in\wp$, $[W_\lambda]=[S_\lambda]$.
\end{corollary}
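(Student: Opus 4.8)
The plan is to reduce the statement to the fact, already available, that $[W_\lambda]=[S_\lambda]$ for every partition $\lambda$, and then to promote this to the assertion that $^\sharp$ induces the identity on the whole Grothendieck group. Since $\PP=\bigoplus_d\PP_d$ and $^\sharp$ preserves degree, it suffices to work in a fixed $\PP_d$, and there $K_0(\PP_d)$ is finitely generated; by Corollary \ref{LemmaKgrp} the classes $[W_\lambda]$, $\lambda\in\wp_d$, form a basis (equivalently, so do the $[L_\lambda]$ or the $[S_\lambda]$). So the map $[M]\mapsto[M^\sharp]$ is determined once we know its values on a basis.

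First I would establish $[W_\lambda]=[S_\lambda]$. Recall $W_\lambda=S_\lambda^\sharp$ by definition, and by Theorem \ref{portemanteau}(iii) the composition factors of $S_\lambda$ are $L_\lambda$ with multiplicity one together with $L_\mu$'s for $\mu<\lambda$ in the lexicographic order; dualizing and using that each $L_\mu$ is self-dual (Theorem \ref{portemanteau}(ii)), the composition factors of $W_\lambda=S_\lambda^\sharp$ are again $L_\lambda$ once and various $L_\mu$ with $\mu<\lambda$. Hence in $K_0(\PP_d)$ both $[S_\lambda]$ and $[W_\lambda]$ equal $[L_\lambda]+\sum_{\mu<\lambda}c_\mu[L_\mu]$; moreover the transition matrices from the $[L]$-basis to the $[S]$- and $[W]$-bases are each lower-unitriangular in the lexicographic order. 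I would then argue that these two transition matrices actually coincide: the multiplicity of $L_\mu$ in $S_\lambda$ equals the multiplicity of $L_\mu$ in $S_\lambda^\sharp$ because taking $^\sharp$ of a composition series of $S_\lambda$ produces a composition series of $S_\lambda^\sharp$ with the factors $L_\nu^\sharp\simeq L_\nu$ (in reversed order), so the multiset of composition factors is unchanged. Therefore $[W_\lambda]=[S_\lambda]$ in $K_0(\PP_d)$, hence in $K(\PP)$.

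Next, $[M]\mapsto[M^\sharp]$ is an additive endomorphism of $K_0(\PP)$ (it is induced by an exact functor, $^\sharp$ being an exact equivalence, so it carries short exact sequences to short exact sequences). To see it is the identity it is enough to check it fixes a basis, and I would use the basis $\{[W_\lambda]\}_{\lambda\in\wp}$ from Corollary \ref{LemmaKgrp}. On this basis, $[W_\lambda]\mapsto[W_\lambda^\sharp]=[S_\lambda^{\sharp\sharp}]=[S_\lambda]$, which equals $[W_\lambda]$ by the previous paragraph. Hence $[M^\sharp]=[M]$ for all $M$, and in particular, reading the equality on Schur functors, $[S_\lambda]=[S_\lambda^\sharp]=[W_\lambda]$, recovering the ``in particular'' clause. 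I would finally note the identical conclusion holds on the complexification $K(\PP)$ by $\C$-linearity.

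The only real point requiring care — the ``main obstacle,'' such as it is — is the claim that $^\sharp$ preserves the multiset of composition factors of $S_\lambda$, i.e. that a Jordan–Hölder series dualizes to a Jordan–Hölder series with the same factors (each simple being self-dual). This is elementary but should be stated: if $0=M_0\subset M_1\subset\cdots\subset M_r=S_\lambda$ has quotients $L_{\nu_j}$, then applying the exact contravariant functor $^\sharp$ gives $0=M_r^\sharp/M_r^\sharp\subset\cdots\subset M_1^\sharp\subset M_0^\sharp=S_\lambda^\sharp$ — more precisely the images of the $M_j^\sharp$ under the surjections — with successive quotients $L_{\nu_j}^\sharp\simeq L_{\nu_j}$, so the Grothendieck classes agree. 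Everything else is bookkeeping with the lexicographic triangularity already recorded in Theorem \ref{portemanteau} and Corollary \ref{LemmaKgrp}.
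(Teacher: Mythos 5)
Your proof is correct and rests on the same key input as the paper's: simple functors are self-dual (Theorem \ref{portemanteau}(ii)) and Kuhn duality $^\sharp$ is exact, so it preserves the multiset of composition factors and hence classes in $K(\PP)$. The detour through the basis $\{[W_\lambda]\}_{\lambda\in\wp}$ is harmless but unnecessary, since your composition-series argument applies verbatim to an arbitrary $M\in\PP$ and already gives $[M^\sharp]=[M]$ directly, which is exactly the paper's one-line proof.
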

\begin{proof}
By Theorem \ref{portemanteau}(ii) simple functors are self-dual. Whence the result.
\end{proof}

\subsection{Polynomial bifunctors}\label{SectionBi}
We shall also need the category $\PP^{[2]}$ of \emph{ strict polynomial bi-functors}. The objects of $\PP^{[2]}$ are functors $B:\V\times\V\to \V$ such that for every $V\in\V$, the functors $B(\cdot,V)$ and $B(V,\cdot)$ are in $\PP$ and their degrees are bounded with respect to $V$. Morphisms in $\PP^{[2]}$ are natural
transformations of functors. The following example will be of particular interest to us.

\begin{example}
Let $M\in\PP$. We denote by $M^{[2]}$ the bifunctor:
$$\begin{array}{cccc}M^{[2]}:&\V\times\V&\to &\V\\
& (V,W)&\mapsto & M(V\oplus W)\\
& (f,g)&\mapsto & M(f\oplus g)
\end{array}.$$
Mapping $M$ to $M^{[2]}$ yields a functor:
$\PP\to \PP^{[2]}$.
\end{example}

If $B\in\PP^{[2]}$ and $(V,W)$ is a pair of vector spaces, then functoriality endows $B(V,W)$ with a polynomial $GL(V)\times GL(W)$-action, which we denote by $\pi_{B,V,W}$ (or simply by $\pi$ if the context is clear):
$$\pi_{B,V,W}:GL(V)\times GL(W)\to GL(B(V,W))\;.$$
Evaluation on a pair $(V,W)$ of vector spaces yields a functor from $\PP^{[2]}$ to  $Rep_{pol}(GL(V)\times GL(W))$.

A bifunctor $B$ is \emph{homogeneous of bidegree} $(d,e)$ if for all $V\in\V$, $B(V,\cdot)$ (resp. $B(\cdot,V)$) is a homogeneous strict polynomial functor of degree $d$, (resp. of degree $e$). The decomposition of strict polynomial functors into a finite direct sums of homogeneous functors generalizes to bifunctors. Indeed, if $B\in\PP^{[2]}$ the vector space $B(V,W)$ is endowed with a polynomial action of $GL(k)\times GL(k)$ defined by:
$$(\lambda,\mu)\cdot m= \pi_{B,V,W}(\lambda 1_V,\mu 1_W)(m)\;, $$
and pairs of linear maps $(f,g)$ induce $GL(k)\times GL(k)$-equivariant morphisms $B(f,g)$. So for $i,j\ge 0$ we can use the $(i,j)$ weight spaces with respect to the action of $GL(k)\times GL(k)$ to define bifunctors $B_{i,j}$, namely
$$B_{i,j}(V,W)=\{m\in B(V,W): (\lambda,\mu)\cdot m = \lambda^i\mu^jm\}$$
and $B_{i,j}(f,g)$ is the restriction of $B(f,g)$ to the $(i,j)$-weight spaces. Functors $B_{i,j}$ are homogenous of bidegree $(i,j)$ and $\PP^{[2]}$ splits as the direct sum of its full subcategories $\PP^{[2]}_{i,j}$ of homogeneous bifunctors of bidegree $(i,j)$. If $B\in\PP^{[2]}$, we denote by $B_{*,j}$ the direct sum:
\begin{equation}
\label{Bdecomp}
B_{*,j}=\bigoplus_{i\ge 0}B_{i,j}
\end{equation}
Note that we have also a duality for bifunctors 
$$^\sharp:\PP^{[2]}\xrightarrow[]{\simeq} \PP^{[2]\,\mathrm{op}}$$
which sends $B$ to $B^\sharp$, with $B^\sharp(V,W)=B(V^*,W^*)^*$, and which respects the bidegrees. 

The generalization of these ideas to the category of strict polynomial \emph{tri-functors} of finite degree $\PP^{[3]}$, which contains the tri-functors $M^{[3]}:(U,V,W)\mapsto M(U\oplus V\oplus W)$, and so on, is straight-forward.

We conclude this section by introducing a construction of new functors in $\PP$ from old ones that will be used in the next section.  Let $M\in\PP$ and consider the functor $M^{[2]}(\cdot,k)\in\PP$. By (\ref{Bdecomp}) we have a decomposition
$$
M^{[2]}(\cdot,k)=\bigoplus_{i\geq0} M^{[2]}_{*,i}(\cdot,k).
$$
In other words, $M^{[2]}_{*,i}(V,k)$ is the subspace of weight $i$ of $M(V\oplus k)$ acted on by $GL(k)$ via the composition 
$$GL(k)=1_V\times GL(k)\hookrightarrow GL(V\oplus k)\xrightarrow[]{\pi_{M,V\oplus k}} GL(M(V\oplus k))\;.$$
Since evaluation on $V\oplus k$ as well as taking weight spaces are exact, the assignment $M \mapsto M^{[2]}_{*,i}(\cdot,k)$ defines an exact endo-functor on $\PP$.

\section{Categorification Data}
Having defined the notion of $\g$-categorification and the category $\PP$, we are now ready to begin the task of defining a $\g$-categorification on $\PP$.  The present section is devoted to introducing the necessary data to construct the categorification (cf.  items (1)-(3) of Definition \ref{DefinitionCategorification}.  The following section will be devoted to showing that this data satisfies the required properties (cf. items (4)-(8) of Definition \ref{DefinitionCategorification}).

\subsection{The functors $\E$ and $\F$}
Define $\E,\F:\PP\to\PP$ by 
\begin{eqnarray*}
\E(M)&=&M^{[2]}_{*,1}(\cdot,k) \\ \F(M)&=&M\otimes I
\end{eqnarray*}
for $M\in\PP$.  These are exact functors ($\F$ is clearly exact; for the exactness of $\E$ see the last paragraph of Section \ref{SectionBi}).  We prove that $\E$ and $\F$ are bi-adjoint.
\begin{proposition}\label{first-adjunction}
The pair $(\F,\E)$ is an adjoint pair, i.e. we have an isomorphism, natural with respect to $M,N\in\PP$:
$$\beta:Hom_\PP(\F(M),N)\simeq Hom_\PP(M,\E(N))\;. $$
\end{proposition}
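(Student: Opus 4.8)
The statement is an adjunction $\mathrm{Hom}_\PP(\F(M),N) = \mathrm{Hom}_\PP(M \otimes I, N) \simeq \mathrm{Hom}_\PP(M, \E(N))$ where $\E(N) = N^{[2]}_{*,1}(\cdot,k)$. The natural approach is to identify both sides with a common third object, or to exhibit the unit and counit explicitly. I would work at the level of functors evaluated on a vector space $V$. The key structural observation is that $N^{[2]}(V,k) = N(V\oplus k)$ carries a $GL(V)\times GL(k)$-action, and $\E(N)(V)$ is the weight-$1$ space for the $GL(k)=GL_1$ factor, i.e. the subspace on which $\lambda 1_k$ acts by $\lambda$. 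So I need a natural bijection between $GL(V)$-maps $M(V)\otimes V \to N(V)$ and $GL(V)$-maps $M(V) \to N(V\oplus k)_1$ (the weight-$1$ part).

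**Key steps.** First, recall the standard decomposition coming from the two inclusions $V \hookrightarrow V\oplus k$ and $k\hookrightarrow V\oplus k$ and the projections: functoriality of $N$ applied to the maps $V\oplus k \to V$, $V \to V\oplus k$, and the $GL(k)$-weight grading give a natural direct sum decomposition $N(V\oplus k) = \bigoplus_{i\ge 0} N^{[2]}_{*,i}(V,k)$, and there is a classical identification of the weight-$1$ piece: the ``polarization'' or ``cross-effect'' isomorphism $N^{[2]}_{*,1}(V,k) \simeq (\text{first cross-effect of } N \text{ at } V)$, which for the purposes here is most usefully packaged as a natural $GL(V)$-equivariant isomorphism $N^{[2]}_{*,1}(V,k) \simeq \big(\text{the linearization } \mathrm{Hom}_k(V, N(V)\text{-stuff})\big)$ — more precisely I want $\mathrm{Hom}_\PP(M, \E(N)) \simeq \mathrm{Hom}_\PP(M \otimes I, N)$ directly. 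Concretely: a morphism $M \to \E(N)$ is a natural family $M(V) \to N(V\oplus k)_1$; composing with $N$ applied to $V\oplus k \to V\oplus V \to V$ in a suitable way, or rather precomposing the target with the structure maps, I get a natural transformation $M(V)\otimes V \to N(V)$ using that the weight-$1$ part of $N(V\oplus k)$ ``is'' (by taking the coefficient of the basis vector of $k$) a natural recipient for tensoring with $V$. The cleanest route: define $\beta$ by sending $\phi: M\otimes I \to N$ to the composite $M(V) \xrightarrow{M(\iota_V)} M(V\oplus k) \xrightarrow{?}$ — no; rather, given $\psi: M \to \E(N)$, recover $\phi_V: M(V)\otimes V \to N(V)$ by using a linear map $k \to V$ for each element of $V$ and functoriality of $N$, landing weight-$1$ elements into $N(V\oplus V) \to N(V)$ via the fold map; linearity in the $V$-variable is exactly the weight-$1$ condition. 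Then check the inverse: given $\phi$, build $\psi_V: M(V) \to N(V\oplus k)$ by $M(V) \xrightarrow{M(\mathrm{incl})} M(V\oplus k)$, apply something, and project to weight $1$ — this is where I'd use that $N(V\oplus k) \hookleftarrow$ the relevant summand. Finally verify that the two constructions are mutually inverse and natural in both $M$ and $N$.

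**Main obstacle.** The real work is pinning down the natural isomorphism $\E(N)(V) = N^{[2]}_{*,1}(V,k) \simeq$ ``the $V$-linear part of $N$'', i.e. making precise and $GL(V)$-equivariant the identification that lets $\mathrm{Hom}(M\otimes I, N)$ and $\mathrm{Hom}(M, \E N)$ match up — in other words, proving that $\E$ is (up to the $\otimes I$ on the other side) the ``derivative'' functor and that this derivative is right adjoint to $-\otimes I$. I expect this amounts to the cross-effect/polarization formalism for strict polynomial functors: $N(V\oplus k)$ decomposes by the binomial-type expansion and its degree-$(\deg-1,1)$ bihomogeneous piece is naturally $\mathrm{cr}_2 N(V,k)$, which in the second slot being $k$ is linear hence $\simeq$ the linearization $D_1 N(V)$; then one uses the known fact (or a direct Yoneda argument via the generators $\Gamma^\lambda$ of Remark \ref{rk-tensors}) that $\mathrm{Hom}_\PP(M\otimes I, N) \simeq \mathrm{Hom}_\PP(M, D_1 N)$. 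I would either cite/reprove this for the projective generators $M = \Gamma^\lambda$ — where everything is explicit, $\Gamma^\lambda \otimes I$ and the cross-effects of $\Gamma^\mu$ being computable — and then extend to all $M$ by exactness of both sides and the five lemma, or give the explicit unit/counit and check the triangle identities by hand. The bookkeeping with weight gradings and $GL$-equivariance is routine but fiddly; that's the part I'd be most careful about.
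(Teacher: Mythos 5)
Your route is genuinely different from the paper's, and it can be made to work. The paper does not build a unit and counit by hand: it passes to the bifunctor category $\PP^{[2]}$, uses the known bi-adjunction between the diagonal functor $\Delta$ and the functor $M\mapsto M^{[2]}$ (quoted from the literature) to rewrite $Hom_\PP(M\otimes I,N)$ as $Hom_{\PP^{[2]}}(M\boxtimes I,N^{[2]})$, and then applies the Yoneda lemma for strict polynomial functors --- $Hom_\PP(I,F)\simeq F(k)$ if $F$ has degree one, and $0$ otherwise --- to the inner variable, identifying the parametrized Hom $Hom_\PP(I(*),N(\cdot\oplus *))$ with $N(\cdot\oplus k)_1=\E(N)$. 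Your approach instead works at the level of evaluations: the counit $\E(N)\otimes I\to N$ sends $\xi\otimes v$ to $N(u_v)(\xi)$, where $u_v:V\oplus k\to V$ is $(w,a)\mapsto w+av$, and the weight-one condition is exactly what makes this homogeneous of degree one, hence linear, in $v$ (this is where the infinite-field hypothesis enters); the unit $M\to\E(\F(M))$ sends $m\in M(V)$ to the class of $M(\iota_V)(m)\otimes e$, with $\iota_V:V\to V\oplus k$ the inclusion and $e$ spanning $k$, which lies in the weight-one part because $(1_V\oplus\lambda 1_k)\circ\iota_V=\iota_V$. The paper's argument buys brevity, since its only nontrivial inputs are quoted results; yours buys explicit adjunction data, which is arguably more illuminating, at the cost of the routine but unavoidable verification that the two constructions are mutually inverse and natural in $M$, $N$ and $V$.

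Two caveats. First, your inverse construction is never actually pinned down (``apply something, and project to weight $1$''): the missing ingredient is pairing with the distinguished vector $e$ of the summand $k$, i.e.\ $\psi_V(m)=\phi_{V\oplus k}\bigl(M(\iota_V)(m)\otimes e\bigr)$; the weight-one projection is then automatic rather than something you impose. Second, one of your proposed fallbacks --- to ``cite the known fact'' that $Hom_\PP(M\otimes I,N)\simeq Hom_\PP(M,D_1N)$ --- is circular: modulo the identification of $\E$ with the linearization $D_1$, that fact \emph{is} the proposition being proved. Your other fallback (checking on the projective generators of Remark \ref{rk-tensors} and extending by exactness) is sound, and indeed both $Hom_\PP(\Gamma^\lambda\otimes I,N)$ and $Hom_\PP(\Gamma^\lambda,\E(N))$ compute the $(\lambda_1,\dots,\lambda_n,1)$-weight space of $N(k^{n+1})$ since $\Gamma^\lambda\otimes I=\Gamma^{(\lambda,1)}$; but to run the five lemma you need a comparison map defined naturally for all $M$ before restricting to generators, which sends you back either to the explicit unit/counit or to the paper's bifunctor adjunction.
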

\begin{proof}
We shall use the category $\PP^{[2]}$ of strict polynomial bifunctors. There are functors:
\begin{align*}
&\boxtimes:\PP\times\PP\to \PP^{[2]} &&\otimes:\PP\times\PP\to \PP\\
&\Delta:\PP^{[2]}\to \PP && ^{[2]}:\PP\to \PP^{[2]}
\end{align*}
respectively given by 
\begin{align*}
&M\boxtimes N(V,W)=M(V)\otimes N(W) &&M\otimes N(V)= M(V)\otimes N(V)\\
&\Delta B (V)=B(V,V) && M^{[2]}(V,W)=M(V\oplus W)
\end{align*}
We observe that $\Delta(M\boxtimes N)= M\otimes N$. Moreover, we know (cf. \cite[Proof of Thm 1.7]{FFSS} or \cite[Lm 5.8]{T2}) that $\Delta$ and $^{[2]}$ are bi-adjoint.

Now we are ready to prove the adjunction isomorphism. We have the following natural isomorphisms:
\begin{align*}
Hom_\PP(\F(M),N)&=Hom_\PP(M\otimes I,N)\\
&\simeq Hom_{\PP^{[2]}}(M\boxtimes I, N^{[2]})\\
&\simeq Hom_\PP(M(\cdot),Hom_\PP(I(*),N(\cdot \oplus *)))
\end{align*}
Here $Hom_\PP(I(*),N(\cdot \oplus *))$ denotes the polynomial functor which assigns to $V\in\V$ the vector space $Hom_\PP(I,N(V\oplus *))$.  By Yoneda's Lemma \cite[Thm 2.10]{FS}, for any $F\in\PP$, $Hom_{\PP}(I,F)\simeq F(k)$ if $F$ is of degree one, and zero otherwise. In particular, $Hom_\PP(I,N(V\oplus *))\simeq N(V\oplus k)_1=\E(N)(V)$.  Hence, $Hom_\PP(I(*),N(\cdot \oplus *)) \simeq \E(N)$ and we conclude that there is a natural isomorphism:
$$
Hom_\PP(\F(M),N)\simeq Hom_\PP(M,\E(N)).
$$    
\end{proof}

We are now going to derive the adjunction $(\E,\F)$ from proposition \ref{first-adjunction} and a duality argument. 
The following lemma is an easy check.

\begin{lemma}
For all $M\in\PP$, we have isomorphisms, natural with respect to $M$:
$$\F(M)^\sharp\simeq \F(M^\sharp)\;,\, \E(M)^\sharp\simeq \E(M^\sharp)\;.$$
\end{lemma}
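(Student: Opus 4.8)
The plan is to verify each of the two natural isomorphisms directly from the definitions of the functors $\F$, $\E$, and the Kuhn dual $^\sharp$, reducing everything to the already-established compatibility of $^\sharp$ with degrees, with tensor products, and with the bifunctor constructions of Section \ref{SectionBi}.

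For the first isomorphism, I would compute $\F(M)^\sharp(V) = \F(M)(V^*)^* = (M(V^*)\otimes I(V^*))^* = (M(V^*)\otimes V^*)^* \simeq M(V^*)^*\otimes (V^*)^* \simeq M^\sharp(V)\otimes V = M^\sharp(V)\otimes I(V) = \F(M^\sharp)(V)$, using the standard natural isomorphism $(A\otimes B)^*\simeq A^*\otimes B^*$ for finite-dimensional vector spaces and the canonical isomorphism $(V^*)^*\simeq V$. One checks these identifications are natural in $V$ and compatible with the functorial structure, so they assemble into an isomorphism $\F(M)^\sharp\simeq\F(M^\sharp)$ in $\PP$, evidently natural in $M$.

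For the second isomorphism, the key point is that the Kuhn dual on $\PP^{[2]}$ commutes with the constructions used to define $\E$. Recall $\E(M) = M^{[2]}_{*,1}(\cdot,k)$, i.e. the weight-$1$ space (with respect to the $GL(k)$ acting on the second summand) of $M(\cdot\oplus k)$. First, the bifunctor duality satisfies $(M^{[2]})^\sharp\simeq (M^\sharp)^{[2]}$: indeed $(M^{[2]})^\sharp(V,W) = M^{[2]}(V^*,W^*)^* = M(V^*\oplus W^*)^* \simeq M((V\oplus W)^*)^* = M^\sharp(V\oplus W) = (M^\sharp)^{[2]}(V,W)$, again naturally. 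Next, taking the $\sharp$-dual is compatible with taking the $(i,j)$-weight space: since $^\sharp$ on bifunctors respects bidegrees, one has $(B^\sharp)_{i,j}\simeq (B_{i,j})^\sharp$, and in particular $(B^\sharp)_{*,1}\simeq (B_{*,1})^\sharp$. Finally, evaluating the second variable at $k$ commutes with $^\sharp$ because $k^*\simeq k$. Chaining these: $\E(M)^\sharp = \big((M^{[2]})_{*,1}(\cdot,k)\big)^\sharp \simeq \big((M^{[2]})^\sharp\big)_{*,1}(\cdot,k) \simeq \big((M^\sharp)^{[2]}\big)_{*,1}(\cdot,k) = \E(M^\sharp)$, with all isomorphisms natural in $M$.

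The only mildly delicate point — the ``main obstacle,'' though it is still routine — is bookkeeping the $GL(k)$-action when passing $^\sharp$ through the weight-space decomposition: one must check that the duality intertwines the homothety action defining $B_{i,j}$ with the one defining $(B^\sharp)_{i,j}$, i.e. that $\pi_{B^\sharp,V,W}(\lambda 1_V,\mu 1_W)$ is the transpose-inverse of $\pi_{B,V^*,W^*}(\lambda^{-1}1_{V^*},\mu^{-1}1_{W^*})$, so that the $\lambda^i\mu^j$-eigenspace is carried to the $\lambda^i\mu^j$-eigenspace (not its inverse). This is exactly the statement, already invoked in Section \ref{SectionBi}, that $^\sharp$ respects bidegrees, so no new work is needed. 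Everything else is a formal manipulation of canonical isomorphisms of finite-dimensional vector spaces, which is why the lemma is ``an easy check.''
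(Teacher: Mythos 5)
Your proposal is correct and follows essentially the same route as the paper: the first isomorphism is the compatibility $(M\otimes I)^\sharp\simeq M^\sharp\otimes I^\sharp$ with $I^\sharp\simeq I$, and the second is the same chain commuting $^\sharp$ with the bifunctor construction $^{[2]}$ (duality commutes with direct sums), with the bidegree/weight-space decomposition, and with evaluation at $k$ via $k^*\simeq k$. Your extra remark on tracking the $GL(k)$-homothety action through the duality is just an expanded justification of the paper's appeal to the fact that $^\sharp$ respects bidegrees, so nothing is missing.
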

\begin{proof}
We have an isomorphism:
$$\F(M)^\sharp= (M\otimes I)^\sharp\simeq M^\sharp\otimes I^\sharp = \F(M^\sharp)\;,$$
and a chain of isomorphisms:
\begin{align*}\E(M)^\sharp=\big(M^{[2]}_{*,1}(\cdot,k)\big)^\sharp&\simeq (M^{[2]}_{*,1})^\sharp(\cdot,k)\\
&\simeq (M^{[2]\,\sharp})_{*,1}(\cdot,k)\\
&\simeq (M^\sharp)^{[2]}_{*,1}(\cdot,k) = \E(M^\sharp)\;.
\end{align*}
In the chain of isomorphisms, the first isomorphism follows from the isomorphism of vector spaces $k^\vee\simeq k$, the second follows from the fact that duality preserves bidegrees, and the last one from the fact that duality of vector spaces commutes with direct sums.
\end{proof}

\begin{proposition}\label{second-adjunction}
The pair $(\E,\F)$ is an adjoint pair, i.e. we have an isomorphism, natural with respect to $M,N\in\PP$:
$$\alpha:Hom_\PP(\E(M),N)\simeq Hom_\PP(M,\F(N))\;. $$
\end{proposition}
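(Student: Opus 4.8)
The plan is to derive the adjunction $(\E,\F)$ from the adjunction $(\F,\E)$ already established in Proposition \ref{first-adjunction}, using Kuhn duality as the bridge. The key observation is that $\sharp$ is a (contravariant) self-equivalence of $\PP$, and by the preceding lemma it intertwines $\E$ and $\F$ with themselves, i.e. $\F(M)^\sharp\simeq\F(M^\sharp)$ and $\E(M)^\sharp\simeq\E(M^\sharp)$, all naturally in $M$.

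First I would recall the general principle: if $(L,R)$ is an adjoint pair of exact functors on a category $\calli{C}$ equipped with a contravariant self-equivalence $D$ satisfying $D\circ L\simeq L'\circ D$ and $D\circ R\simeq R'\circ D$ for some functors $L',R'$, then $(R',L')$ is an adjoint pair. Indeed, for $M,N\in\PP$ one has the chain of natural isomorphisms
$$Hom_\PP(\E(M),N)\simeq Hom_\PP(N^\sharp,\E(M)^\sharp)\simeq Hom_\PP(N^\sharp,\E(M^\sharp))\;,$$
where the first isomorphism is the equivalence $^\sharp:\PP\to\PP^{\mathrm{op}}$ and the second is the lemma. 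Now apply Proposition \ref{first-adjunction} with the roles $M\rightsquigarrow N^\sharp$ and $N\rightsquigarrow M^\sharp$: this gives
$$Hom_\PP(\F(N^\sharp),M^\sharp)\simeq Hom_\PP(N^\sharp,\E(M^\sharp))\;.$$
Hmm, I want the left side to become $Hom_\PP(\E(M),N)$; let me instead run the computation so that Proposition \ref{first-adjunction} is applied to produce $Hom_\PP(M,\F(N))$ on the other end. Concretely:
$$Hom_\PP(\E(M),N)\simeq Hom_\PP(N^\sharp,\E(M)^\sharp)\simeq Hom_\PP(N^\sharp,\E(M^\sharp))\simeq Hom_\PP(\F(N^\sharp),M^\sharp)\;,$$
the last step by Proposition \ref{first-adjunction}. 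Then using the lemma again, $\F(N^\sharp)\simeq\F(N)^\sharp$, so $Hom_\PP(\F(N^\sharp),M^\sharp)\simeq Hom_\PP(\F(N)^\sharp,M^\sharp)\simeq Hom_\PP(M,\F(N))$, where the final isomorphism is once more the duality equivalence $^\sharp:\PP\to\PP^{\mathrm{op}}$ applied in reverse (using $(M^\sharp)^\sharp\simeq M$). Composing all of these yields the desired natural isomorphism $\alpha$.

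The steps, in order: (1) invoke that $^\sharp$ is a contravariant equivalence, hence induces bijections $Hom_\PP(A,B)\simeq Hom_\PP(B^\sharp,A^\sharp)$ natural in both variables; (2) apply the lemma to replace $\E(M)^\sharp$ by $\E(M^\sharp)$ and $\F(N)^\sharp$ by $\F(N^\sharp)$; (3) apply Proposition \ref{first-adjunction} at the pair $(N^\sharp,M^\sharp)$; (4) chain the isomorphisms and check naturality in $M$ and $N$. There is no real obstacle here — the only thing requiring a moment's care is bookkeeping the variance: each application of $^\sharp$ reverses the direction of $Hom$, and one must track that an even number of such reversals is used so that the final statement is a genuine (covariant) adjunction isomorphism rather than its opposite. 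Naturality is automatic since every isomorphism in the chain is natural in the relevant arguments, being built from the equivalence $^\sharp$, the natural isomorphisms of the lemma, and the natural adjunction isomorphism $\beta$ of Proposition \ref{first-adjunction}.
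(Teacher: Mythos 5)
Your proposal is correct and follows essentially the same route as the paper: both derive the adjunction from Proposition \ref{first-adjunction} by the identical chain of natural isomorphisms $Hom_\PP(\E(M),N)\simeq Hom_\PP(N^\sharp,\E(M)^\sharp)\simeq Hom_\PP(N^\sharp,\E(M^\sharp))\simeq Hom_\PP(\F(N^\sharp),M^\sharp)\simeq Hom_\PP(\F(N)^\sharp,M^\sharp)\simeq Hom_\PP(M,\F(N))$, using the duality equivalence $^\sharp$ and the lemma that $^\sharp$ commutes with $\E$ and $\F$. Your extra remarks on variance bookkeeping and naturality are sound but add nothing beyond the paper's argument.
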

\begin{proof}The adjunction isomorphism of proposition \ref{second-adjunction} is defined as the composite of the natural isomorphisms:
\begin{align*}
&Hom_\PP(\E(M),N)\simeq Hom_\PP(N^\sharp,\E(M)^\sharp)\simeq Hom_\PP(N^\sharp,\E(M^\sharp))\\&\simeq Hom_\PP(\F(N^\sharp),M^\sharp) \simeq Hom_\PP(\F(N)^\sharp,M^\sharp)\simeq Hom_\PP(M,\F(N))\;.
\end{align*}
\end{proof}
\subsection{The operators $X$ and $T$}

We first introduce the natural transformation $X:\E \to \E$.
We assume that $p\neq2$. For any $V\in \V$, let $U(\mathfrak{gl}(V\oplus k))$ denote the enveloping algebra of $\mathfrak{gl}(V\oplus k)$, and let $X_V \in U(\mathfrak{gl}(V \oplus k))$ be defined as follows. Fix a basis $V=\bigoplus_{i=1}^n ke_i$; this choice induces a basis of $V \oplus k$.  Let $x_{i,j} \in gl(V \oplus k)$ be the operator mapping $e_j$ to $e_i$ and $e_{\ell}$ to zero for all $\ell \neq j$.  Then define $$X_V=\sum_{i=1}^{n} x_{n+1,i}x_{i,n+1}-n.$$ 
The element $X_V$ does not depend on the choice of basis.  Indeed, let $C_V \in U(\mathfrak{gl}(V))$ denote the Casimir operator:
\begin{equation}
\label{Casimir}
C_V=\sum_{i\neq j}x_{i,j}x_{j,i}+\sum_{\ell=1}^nx_{\ell,\ell}^2.
\end{equation}
It's well known that $C_V$ does not depend on the choice of basis of $V$, and one computes (see Lemma 3.27 in \cite{HY11})
\begin{equation}
\label{EqXV}
C_{V\oplus k}-C_V=2X_V+\sum_{i=1}^n x_{i,i}-nx_{n+1,n+1}+x_{n+1,n+1}^2+2n.
\end{equation}
(Note this is where the hypothesis $p\neq 2$\ is used.)  Therefore $X_V$ also does not depend on the choice of basis of $V$.

The group $GL(V)\times GL(k)\subset GL(V\oplus k)$ acts on the Lie algebra $\mathfrak{gl}(V\oplus k)$ by the adjoint action, hence on the algebra $U(\mathfrak{gl}(V\oplus k)$. By (\ref{EqXV}) we have:  
\begin{lemma}
\label{lemma_X}
Let $V \in \V$.  Then $X_{V}$ commutes with $GL(V)\times GL(k)$, i.e.
$$
X_{V} \in U(gl(V \oplus k))^{GL(V)\times GL(k)}.
$$

\end{lemma}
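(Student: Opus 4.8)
The plan is to deduce the statement directly from the identity~(\ref{EqXV}), which expresses $2X_V$ as a difference of Casimir elements plus a correction term, and then to check that every piece on the right-hand side is $GL(V)\times GL(k)$-invariant under the adjoint action. Since $p\neq 2$ (so $2$ is invertible in $k$), it suffices to show $2X_V$ is invariant, and by~(\ref{EqXV}) this reduces to showing that each of the following lies in $U(\mathfrak{gl}(V\oplus k))^{GL(V)\times GL(k)}$: the Casimir $C_{V\oplus k}$, the Casimir $C_V$, the elements $\sum_{i=1}^n x_{i,i}$ and $x_{n+1,n+1}$, and $x_{n+1,n+1}^2$.

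First I would recall the standard fact that the Casimir element $C_W$ of $\mathfrak{gl}(W)$ is central in $U(\mathfrak{gl}(W))$, hence a fortiori invariant under the adjoint action of $GL(W)$; applying this with $W=V\oplus k$ shows $C_{V\oplus k}$ is $GL(V\oplus k)$-invariant, and in particular $GL(V)\times GL(k)$-invariant. For $C_V\in U(\mathfrak{gl}(V))\subset U(\mathfrak{gl}(V\oplus k))$: it is central in $U(\mathfrak{gl}(V))$, hence invariant under $GL(V)$; it is invariant under $GL(k)$ because $GL(k)=1_V\times GL(k)$ acts trivially on $\mathfrak{gl}(V)$ (the blocks commute, so the adjoint action of the $GL(k)$-block on $x_{i,j}$ with $i,j\le n$ is trivial). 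The element $x_{n+1,n+1}$ (and hence $x_{n+1,n+1}^2$) is fixed by $GL(V)$ for the same block-commutation reason, and it is fixed by $GL(k)$ because for $t\in GL(k)$ acting on the $(n+1)$-coordinate by scaling, $\mathrm{Ad}(t)x_{n+1,n+1}=x_{n+1,n+1}$ (a diagonal entry is unchanged by conjugation by a diagonal matrix). Finally, $\sum_{i=1}^n x_{i,i}$ is the identity element of $\mathfrak{gl}(V)\subset\mathfrak{gl}(V\oplus k)$ under the block inclusion, so it is central in $U(\mathfrak{gl}(V))$ and hence $GL(V)$-invariant, and it is $GL(k)$-invariant by the block-commutation argument again.

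Assembling these observations, the right-hand side of~(\ref{EqXV}) minus the terms other than $2X_V$ is a $\Z$-linear (indeed $k$-linear) combination of invariant elements, hence invariant; therefore $2X_V$ is invariant, and since $2$ is a unit in $k$ we conclude $X_V\in U(\mathfrak{gl}(V\oplus k))^{GL(V)\times GL(k)}$.

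I do not expect a genuine obstacle here: the only subtlety is bookkeeping — making sure that the "correction" terms in~(\ref{EqXV}) are individually invariant rather than only invariant as a sum, and being careful that the hypothesis $p\neq 2$ is what licenses passing from $2X_V$ to $X_V$. If one wanted to avoid invoking centrality of the Casimir twice, an alternative is to argue more uniformly: $GL(V)\times GL(k)$ normalizes both $\mathfrak{gl}(V)$ and $\mathfrak{gl}(V\oplus k)$, the adjoint action is by algebra automorphisms of the respective enveloping algebras, and the Casimir of any $\mathfrak{gl}$ is preserved by all automorphisms coming from linear automorphisms of the underlying space; combined with the triviality of the $GL(k)$-action on the $V$-block this gives all the needed invariances at once. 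Either way the argument is short once~(\ref{EqXV}) is in hand.
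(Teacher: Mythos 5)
Your argument is correct and is essentially the paper's own proof: the lemma is deduced directly from (\ref{EqXV}), the basis-independence (hence $GL$-invariance) of the Casimir elements $C_{V\oplus k}$ and $C_V$, the evident $GL(V)\times GL(k)$-invariance of the remaining diagonal terms, and the hypothesis $p\neq 2$ to pass from $2X_V$ to $X_V$. One caution: in characteristic $p$ the inference ``central in $U(\mathfrak{gl}(W))$, hence $\mathrm{Ad}(GL(W))$-invariant'' is not valid in general (for instance the $p$-center elements $x^p-x^{[p]}$ are central but not fixed by conjugation), so the invariance of the Casimirs should be justified as in your closing alternative --- $\mathrm{Ad}(g)$ sends a trace-form dual basis to a trace-form dual basis, so it sends the Casimir to the Casimir computed in the new basis, which is the same element.
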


The universal enveloping algebra $U(\mathfrak{gl}(V\oplus k))$ acts on $M(V\oplus k)$  via differentiation:
$$
d\pi_{M,V}:U(\mathfrak{gl}(V\oplus k)) \to End(M(V\oplus k)).
$$
\begin{example}\label{ex-tens}
If $M=I$ is the identity functor of $\V$, and $f\in \mathfrak{gl}(V\oplus k)$, then $d\pi_{I,V\oplus k}(f)=f$. More generally, if $d\ge 2$ and $M=\otimes^d$ is the $d$-th tensor product, then $d\pi_{\otimes^d,V\oplus k}$ sends $f\in\mathfrak{gl}(V\oplus k)$ onto the the element
$$\sum_{i=1}^d (1_{V\oplus k})^{\otimes i-1}\otimes f\otimes (1_{V\oplus k})^{\otimes d-i}\quad \in End((V\oplus k)^{\otimes d})\;.$$
\end{example}

The element $X_V$ acts on the vector space $M(V\oplus k)$ via $d\pi_{M,V}$, and we denote by $X_{M,V}$ the induced $k$-linear map:
$$X_{M,V}:M(V\oplus k)\to M(V\oplus k)\;.$$
By Lemma \ref{lemma_X}, $X_{M,V}$ is $GL(V)\times GL(k)$-equivariant. Thus, it restricts to the subspaces $\E(M)(V)$ of weight $1$ under the action of $\{1_V\}\times GL(k)$.  We denote the resulting map also by $X_{M,V}$:
$$
X_{M,V}:\E(M)(V) \to \E(M)(V).
$$ 

\begin{proposition}
\label{Prop_X}
The linear maps $X_{M,V}:\E(M)(V)\to \E(M)(V)$ are natural with respect to $M$ and $V$. Hence they define a morphism of functors
$$X:\E \to \E. $$
\end{proposition}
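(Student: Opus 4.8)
The statement to prove is that the linear maps $X_{M,V}:\E(M)(V)\to\E(M)(V)$ assemble into a morphism of functors $X:\E\to\E$; that is, they are natural with respect to both $M$ and $V$. Naturality in $M$ means: for every morphism $\phi:M\to N$ in $\PP$ and every $V$, the square
\[
\begin{CD}
\E(M)(V) @>{X_{M,V}}>> \E(M)(V)\\
@V{\E(\phi)_V}VV @VV{\E(\phi)_V}V\\
\E(N)(V) @>{X_{N,V}}>> \E(N)(V)
\end{CD}
\]
commutes. Naturality in $V$ means: for every linear map $g:V\to W$ the analogous square with vertical maps $\E(M)(g)$ commutes.

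First I would reduce both naturality statements to a single principle: the element $X_V\in U(\mathfrak{gl}(V\oplus k))$ acts on $M(V\oplus k)$ \emph{through the representation $d\pi_{M,V\oplus k}$}, and any natural transformation or functorial map is, essentially by definition, compatible with the structure maps of these representations. Concretely, for naturality in $M$: a morphism $\phi:M\to N$ gives a $GL(V\oplus k)$-equivariant map $\phi_{V\oplus k}:M(V\oplus k)\to N(V\oplus k)$, hence (differentiating) a $U(\mathfrak{gl}(V\oplus k))$-module map. Therefore $\phi_{V\oplus k}\circ d\pi_{M,V\oplus k}(X_V)=d\pi_{N,V\oplus k}(X_V)\circ\phi_{V\oplus k}$, i.e. $\phi_{V\oplus k}\circ X_{M,V}=X_{N,V}\circ\phi_{V\oplus k}$ on all of $M(V\oplus k)$. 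Since $\phi_{V\oplus k}$ is also $GL(k)$-equivariant it restricts to the weight-$1$ subspaces, which are precisely $\E(M)(V)$ and $\E(N)(V)$, and $\E(\phi)_V$ is by construction this restriction; so the square commutes. This part is essentially immediate once one unwinds the definitions.

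The slightly more delicate part is naturality in $V$, because $X_V$ was defined using a chosen basis of $V$, and different $V$'s carry \emph{a priori} unrelated choices; a linear map $g:V\to W$ need not respect bases, and moreover it is not an isomorphism in general so one cannot just conjugate. The key step here is to use the basis-independent description already established in the excerpt: equation (\ref{EqXV}) expresses $2X_V$ (recall $p\ne 2$) in terms of the Casimir $C_{V\oplus k}$, $C_V$, and the specific matrix units $x_{i,i}$, $x_{n+1,n+1}$ of the $k$-summand. I would instead isolate the truly canonical quantity: on the weight-$1$ subspace $\E(M)(V)$, the contributions of $\sum x_{i,i}$ (which acts as the degree operator on the $V$-part) and of $x_{n+1,n+1}$ (the degree operator on the $k$-part, equal to $1$ on weight-$1$ vectors) are determined intrinsically, so on $\E(M)(V)$ the operator $X_{M,V}$ equals $\tfrac12\big(d\pi_{M,V\oplus k}(C_{V\oplus k}) - d\pi_{M,V\oplus k}(C_V)\big)$ plus an explicit scalar depending only on the weight decomposition — quantities that are manifestly functorial in $V$, since the Casimir is natural with respect to linear embeddings $V\hookrightarrow W$ (and a general $g$ factors through a split injection followed by a surjection, with both $C$ and the $GL(k)$-weight data behaving naturally along each). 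Alternatively, and perhaps more cleanly, one observes that $X_{M,V}$ is the restriction of the natural transformation of the bifunctor $M^{[2]}$ obtained by letting $X$ act via $d\pi_{M^{[2]}}$, where the operator "$\sum_{i} x_{n+1,i}x_{i,n+1}$'' is the image under $d\pi$ of a canonical element of $\mathrm{gl}(V)^*\otimes\mathrm{gl}(V) \subset \mathrm{gl}(V\oplus k)\otimes\mathrm{gl}(V\oplus k)$ that is functorial in $V$ in the appropriate lax sense.

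I expect the main obstacle to be precisely this bookkeeping around the basis-dependence: one must show that, restricted to weight-$1$ subspaces, the "$-n$'' shift and the matrix-unit corrections in (\ref{EqXV}) combine into something functorial, rather than an artifact of the chosen basis. The cleanest route is to fix a morphism $g:V\to W$, choose compatible bases (extend a basis of $\mathrm{im}(g)$, pull back to $V$, push forward to $W$), verify the square on tensor powers $\otimes^d$ using the explicit formula of Example \ref{ex-tens} — where the computation is a finite, transparent manipulation of elementary tensors — and then invoke Remark \ref{rk-tensors}, which says every $M\in\PP_d$ is a subquotient of a direct sum of copies of $\otimes^d$, together with the naturality-in-$M$ statement already proved, to propagate the result from $\otimes^d$ to all of $\PP$. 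Exactness of $\E$ and of the weight-space functors ensures subquotients cause no trouble.
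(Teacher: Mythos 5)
Your proposal is correct and follows essentially the same route as the paper: naturality in $M$ comes for free from the naturality of the $U(\mathfrak{gl}(V\oplus k))$-action, and naturality in $V$ is reduced, via Remark \ref{rk-tensors} and the naturality in $M$ already established, to a direct check on $M=\otimes^d$ using the explicit formula of Example \ref{ex-tens}. The extra discussion of Casimir-based basis-independence is harmless but not needed for the argument you ultimately carry out.
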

\begin{proof}
The action of $U(\mathfrak{gl}(V\oplus k))$ on $M(V\oplus k)$ is natural with respect to $M$. Hence the $k$-linear maps $X_{M,V}$ are natural with respect to $M$.

So it remains to check the naturality with respect to $V\in\V$. For this, it suffices to check that for all $M\in\PP$, and for all $f\in Hom(V,W)$, diagram (D) below is commutative.
$$\xymatrix{
M(V\oplus k)\ar[rr]^-{M(f\oplus 1_k)}\ar[d]^{X_{V,M}}&& M(W\oplus k)\ar[d]^{X_{W,M}}\\
M(V\oplus k)\ar[rr]^-{M(f\oplus 1_k)}&& M(W\oplus k)
}\qquad (D)$$

We observe that if diagram (D) commutes for a given strict polynomial functor $M$, then by naturality with respect to $M$, it also commutes for direct sums $M^{\oplus n}$, for $n\ge 1$, for the subfunctors $N\subset M$ and the quotients $M\twoheadrightarrow N$. But as we already explained in remark \ref{rk-tensors}, every functor $M\in\PP$ is a subquotient of a finite direct sum of copies of the tensor product functors $\otimes^d$, for $d\ge 0$. Thus, to prove naturality with respect to $V$, it suffices to check that diagram (D) commutes for $M=\otimes^d$ for all $d\ge 0$.

In the case of the tensor products $\otimes^d$ the action of $U(\mathfrak{gl}(V\oplus k)$ is explicitly given in example \ref{ex-tens}. Using this expression, a straightforward computation shows that diagram (D) is commutative in this case. This finishes the proof.  
\end{proof}

We next introduce a natural transformation $T:\E^2 \to \E^2$.  
Let $M\in\PP$ and $V\in\V$.  By definition, $$\E^2(M)=M^{[3]}_{*,1,1}(\cdot,k,k).$$ Consider the map $1_V \oplus \sigma:V \oplus k \oplus k \to V \oplus k \oplus k$ given by: $(v,a,b)\mapsto(v,b,a)$.  Applying $M^{[3]}$ to this map we obtain a morphism:
$$
T_{M,V}:M^{[3]}_{*,1,1}(V,k,k)\to M^{[3]}_{*,1,1}(V,k,k).
$$
\begin{lemma}
The linear maps $T_{M,V}:\E^2(M)(V)\to \E^2(M)(V)$ are natural with respect to $M$ and $V$. Hence they define a morphism of functors
$$T:\E^2 \to \E^2. $$
\end{lemma}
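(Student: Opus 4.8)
The plan is to imitate the proof of Proposition \ref{Prop_X}: naturality with respect to $M$ is essentially automatic from the functoriality of the construction, and naturality with respect to $V$ is checked by reducing to the generating functors $\otimes^d$, where the map $T_{M,V}$ can be written down explicitly.

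First I would dispense with naturality in $M$. The map $T_{M,V}$ is obtained by applying the functor $M^{[3]}$ to the fixed linear map $1_V\oplus\sigma:V\oplus k\oplus k\to V\oplus k\oplus k$ and then restricting to the $(*,1,1)$-weight space for the $GL(k)\times GL(k)$-action on the last two summands. A morphism $\phi:M\to N$ in $\PP$ induces a natural transformation $\phi^{[3]}:M^{[3]}\to N^{[3]}$ of trifunctors, which by naturality commutes with $M^{[3]}(1_V\oplus\sigma)$ and $N^{[3]}(1_V\oplus\sigma)$; since $\phi^{[3]}$ is $GL(k)\times GL(k)$-equivariant it preserves the weight decomposition, so it restricts to the $(*,1,1)$-part. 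Hence the square expressing naturality of $T$ in $M$ commutes.

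For naturality in $V$, the goal is to show that for every $f\in Hom_\V(V,W)$ the diagram
$$\xymatrix{
\E^2(M)(V)\ar[rr]^-{\E^2(M)(f)}\ar[d]_{T_{M,V}}&& \E^2(M)(W)\ar[d]^{T_{M,W}}\\
\E^2(M)(V)\ar[rr]^-{\E^2(M)(f)}&& \E^2(M)(W)
}$$
commutes, where $\E^2(M)(f)$ is induced by $M^{[3]}(f\oplus 1_k\oplus 1_k)$. As in Proposition \ref{Prop_X}, if this diagram commutes for a functor $M$ then it commutes for finite direct sums, subfunctors and quotients of $M$ (using naturality in $M$ just established); by Remark \ref{rk-tensors} every object of $\PP$ is a subquotient of a finite sum of copies of $\otimes^d$, so it suffices to treat $M=\otimes^d$. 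For $M=\otimes^d$ we have $M^{[3]}(U)=U^{\otimes d}$ and the two relevant maps are $(f\oplus 1_k\oplus 1_k)^{\otimes d}$ and $(1\oplus\sigma)^{\otimes d}$, both of which are simply tensor powers of linear maps on $V\oplus k\oplus k$ and $W\oplus k\oplus k$. The square reduces to the identity $\big((f\oplus 1_k\oplus 1_k)\circ(1_V\oplus\sigma)\big)^{\otimes d}=\big((1_W\oplus\sigma)\circ(f\oplus 1_k\oplus 1_k)\big)^{\otimes d}$, which holds because $(f\oplus 1_k\oplus 1_k)\circ(1_V\oplus\sigma)=(1_W\oplus\sigma)\circ(f\oplus 1_k\oplus 1_k)$ already as maps $V\oplus k\oplus k\to W\oplus k\oplus k$ (both send $(v,a,b)$ to $(f(v),b,a)$). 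Taking $(*,1,1)$-weight spaces is compatible with all of this since the transition maps are $GL(k)\times GL(k)$-equivariant, so the diagram commutes on $\E^2(M)$.

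The only mild subtlety — and the one place I would be careful — is bookkeeping: checking that restricting $M^{[3]}$-level maps to the correct weight space commutes with everything in sight, i.e. that $\E^2(M)(f)$ really is the restriction of $M^{[3]}(f\oplus 1_k\oplus 1_k)$ and $T_{M,V}$ really is the restriction of $M^{[3]}(1_V\oplus\sigma)$, both for the same $(*,1,1)$-weight decomposition with respect to the last two $GL(k)$-factors. This is exactly the kind of routine equivariance check already used implicitly in the construction of $\E$ and $X$, and presents no real obstacle; once it is in place the computation for $\otimes^d$ is immediate. I therefore expect the proof to be short, essentially a verbatim adaptation of the proof of Proposition \ref{Prop_X}.
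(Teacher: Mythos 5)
Your argument is correct, but you take a longer route than necessary, and than the paper does. You model the proof on Proposition \ref{Prop_X} and reduce naturality in $V$ to the case $M=\otimes^d$ via the subquotient argument of Remark \ref{rk-tensors}; but notice that the identity you invoke in your base case, namely $(f\oplus 1_k\oplus 1_k)\circ(1_V\oplus\sigma)=(1_W\oplus\sigma)\circ(f\oplus 1_k\oplus 1_k)$ as linear maps $V\oplus k\oplus k\to W\oplus k\oplus k$, already proves the statement for \emph{every} $M$: both $\E^2(M)(f)$ and $T_{M,V}$ are restrictions to the $(*,1,1)$-weight spaces of $M$ applied to these two linear maps, so functoriality of $M$ alone turns the commutation of the linear maps into the commutation of the diagram, with no reduction needed. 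This is exactly the paper's (one-line) proof. The reduction to $\otimes^d$ is genuinely needed for $X$ in Proposition \ref{Prop_X}, because there the endomorphism comes from the enveloping-algebra action $d\pi$ rather than from applying $M$ to a morphism of $\V$; for $T$ that machinery is superfluous. Your proof is valid as written (the equivariance bookkeeping you flag is indeed routine), it is just heavier than it needs to be.
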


\begin{proof}
Clearly the maps $T_{M,V}$ are natural with respect to $M$.  Let $f:V\to W$ be a linear operator of vector spaces.  We need to show that the following diagram commutes:
\begin{equation*}
\xymatrix{\E^2(M)(V) \ar[rr]^{\E^2(M)(f)} \ar[d]_{T_{M,V}} && \E^2(M)(W) \ar[d]^{T_{M,W}}\\
\E^2(M)(V) \ar[rr]_{\E^2(M)(f)} &&  \E^2(M)(W)}
\end{equation*}
On the one hand, $\E^2(M)(f)$ is the restriction of $M^{[3]}(f\oplus 1_k\oplus 1_k)$ to the tri-degrees $(*,1,1)$.  On the other hand, $T_{M,V}$ is the restriction of $M^{[3]}(1_V\oplus \sigma)$ to the tri-degrees $(*,1,1)$.  Since $f\oplus 1_k\oplus 1_k$ clearly commutes with $1_V\oplus \sigma$, the above diagram commutes. 
\end{proof}

\subsection{The weight decomposition of $\PP$}

As part of the data of $\g$-categorification, we need to introduce a decomposition of $\PP$ indexed by the weight lattice $P$ of $\g$.  In this section we define such a decomposition via the blocks of $\PP$.

We begin by recalling some combinatorial notions.  For a nonnegative integer $d$, let $\wp_d$ denote the set of partitions of $d$.  A partition $\lambda$ is a \emph{$p$-core} if there exist no $\mu \subset \lambda$ such that the skew-partition $\lambda/\mu$ is a rim $p$-hook.  By definition, if $p=0$ then all partitions are $p$-cores.  Given a partition $\lambda$, we denote by $\tilde{\lambda}$ the $p$-core obtained by successively removing all rim $p$-hooks. The \emph{$p$-weight} of $\lambda$ is by definition the number $(|\lambda|-|\tilde{\lambda}|)/p$.  The notation $|\lambda|$ denotes the size of the partition $\lambda$.  Define an equivalence relation $\sim$ on $\wp_d$ by decreeing $\lambda \sim \mu$ if $\tilde{\lambda}=\tilde{\mu}$.

 Let $\lambda,\mu\in\wp_{d}$.  As a consequence of (11.6) in \cite{K03} we have
\begin{equation}\label{KleshEqua}
\tilde{\lambda}=\tilde{\mu} \Longleftrightarrow wt(\lambda)=wt(\mu).
\end{equation}
(See (\ref{weight}) for the definition of $wt(\lambda)$.)  Therefore we index the set of equivalence classes $\wp_d/\sim$ by weights in $P$, i.e. a weight $\omega \in P$ corresponds to a subset (possibly empty) of $\wp_d$.

Let $Irr\PP_d$ denote the set of simple objects in $\PP_d$ up to isomorphism.  This set is naturally identified with $\wp_d$.  We say two simple objects in $\PP_d$ are \emph{adjacent} if they occur as composition factors of some indecomposable object in $\PP_d$.    Consider the equivalence relation $\approx$  on $Irr\PP_d$ generated by adjacency.    Via the identification of $Irr\PP_d$ with $\wp_d$ we obtain an equivalence relation $\approx$ on $\wp_d$.

\begin{theorem}[Theorem 2.12, \cite{D}]
The equivalence relations $\sim$ and $\approx$ on $\wp_d$ are the same.
\end{theorem}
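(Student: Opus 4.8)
The plan is to prove the two inclusions separately. The inclusion $\approx\ \subseteq\ \sim$ is the "blocks respect weights" direction: if $L_\lambda$ and $L_\mu$ are composition factors of a common indecomposable functor $M\in\PP_d$, then $wt(\lambda)=wt(\mu)$, whence $\tilde\lambda=\tilde\mu$ by \eqref{KleshEqua}. To see this I would use the weight decomposition $\PP=\bigoplus_\omega \PP_\omega$ that is being assembled as part of the categorification data — but since at this point in the paper that decomposition is defined \emph{via} the blocks, the cleaner route is to argue directly. The key input is Proposition \ref{prop-corr} together with \cite{Jantzen}: under evaluation on $k^n$ ($n\ge d$), $S_\lambda(k^n)\simeq H^0(\lambda)$ and $W_\lambda(k^n)\simeq V(\lambda)$, and by Theorem \ref{portemanteau}(iv) the functors $W_\mu$ and $S_\lambda$ are "orthogonal" for $\lambda\ne\mu$ ($\mathrm{Ext}^\bullet_\PP(W_\mu,S_\lambda)=0$). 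This orthogonality is exactly the linkage-principle mechanism: two simples $L_\lambda,L_\mu$ can be adjacent only if they lie in the same linkage class for $GL_n$, and the linkage classes for polynomial $GL_n$-modules of degree $d$ are governed by the affine Weyl group action, which on the level of partitions is precisely the relation $\tilde\lambda=\tilde\mu$. So this direction reduces to quoting the linkage principle for $GL_n$ (or for Schur algebras) and translating it through Proposition \ref{prop-corr}; the combinatorial identification of $GL_n$-linkage classes with $p$-cores is classical (Donkin, Jantzen).

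The reverse inclusion $\sim\ \subseteq\ \approx$ is the substantive direction and is where the cited Theorem 2.12 of \cite{D} does its real work. One must show that if $\tilde\lambda=\tilde\mu$ then $L_\lambda\approx L_\mu$, i.e. one can pass from $\lambda$ to $\mu$ by a chain of adjacencies. The natural approach is induction on the $p$-weight: if $\lambda$ is not itself a $p$-core, remove a rim $p$-hook to get $\lambda'$ with $|\lambda'|=|\lambda|-p$ and exhibit an indecomposable functor in $\PP_d$ having both $L_\lambda$ and $L_{\lambda'\cup(\text{some }p\text{-hook in a standard position})}$ as composition factors — concretely, one expects to use the functors $\E^{(a)},\F^{(a)}$ or, more elementarily, the Schur/Weyl functors $W_\lambda$ themselves: the composition series of $W_\lambda$ (equivalently, decomposition numbers of the Schur algebra, Theorem \ref{portemanteau}(iii)) already forces certain $L_\mu$ with $\mu<\lambda$, $\tilde\mu=\tilde\lambda$ to be adjacent to $L_\lambda$, and iterating connects everything in a fixed $\sim$-class. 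Since the paper explicitly attributes this to \cite{D}, the honest proof here is to cite Donkin's theorem; I would simply note that under the equivalence $\PP_d\simeq Pol_d(GL(k^n))\simeq S(n,d)\text{-mod}$ (Theorem \ref{thm-FS}), the equivalence relation $\approx$ on $Irr\PP_d$ matches the block equivalence relation on simple $S(n,d)$-modules, and then invoke \cite[Thm 2.12]{D}, which states exactly that these blocks are indexed by $p$-cores.

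The main obstacle — and the reason this is stated rather than proved from scratch — is the $\sim\subseteq\approx$ inclusion: showing that a whole $p$-weight class is "connected" requires genuinely producing indecomposables bridging distant partitions, and this is precisely the content of Donkin's block classification (it is not a soft argument). The $\approx\subseteq\sim$ inclusion, by contrast, is the formal linkage-principle half and follows mechanically from \eqref{KleshEqua} plus Proposition \ref{prop-corr}. So in writing this up I would spend one short paragraph on $\approx\subseteq\sim$ via linkage and \eqref{KleshEqua}, and then dispatch $\sim\subseteq\approx$ by transporting through Theorem \ref{thm-FS} and citing \cite{D}; no new computation is needed beyond checking that the categorical notion of adjacency in $\PP_d$ corresponds to the module-theoretic notion of lying in a common block of $S(n,d)$, which is immediate from the definitions.
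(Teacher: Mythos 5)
Your proposal is correct and matches the paper, which offers no independent argument: the statement is simply attributed to Donkin \cite[Thm.~2.12]{D}, with the implicit reduction being exactly the one you describe, namely transporting the notion of adjacency/blocks through the evaluation equivalence of Theorem \ref{thm-FS} to blocks of polynomial $GL_n$-modules (equivalently the Schur algebra $S(n,d)$), where Donkin's classification by $p$-cores applies. Your additional sketch of the two inclusions (linkage principle for $\approx\subseteq\sim$, Donkin's block connectivity for $\sim\subseteq\approx$) is accurate but goes beyond what the paper records.
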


Given an equivalence class $\Theta \in Irr\PP_d/\approx$, the corresponding \emph{block} $\PP_\Theta \subset \PP_d$ is the subcategory of objects whose composition factors belong to $\Theta$.  The \emph{block decomposition} of $\PP$ is given by
$
\PP=\bigoplus\PP_{\Theta}
$, 
where $\Theta$ ranges over all classes in $Irr\PP_d/\approx$ and $d\geq 0$.
 
 By the above theorem and Equation (\ref{KleshEqua}), we can label the blocks  of $\PP_d$ by weights $\omega \in P$.  Moreover, by Equation (\ref{weight}), $wt(\lambda)$ determines the size of $\lambda$.  Therefore the block decomposition of $\PP$ can be expressed as:
$$\PP=\bigoplus_{\omega\in P} \PP_\omega. $$
The \emph{$p$-weight} of a block $\PP_\omega$ is the $p$-weight of $\lambda$, where $\omega=wt(\lambda)$.  This is well-defined since if $wt(\lambda)=wt(\mu)$ then $|\lambda|=|\mu|$ and $\tilde{\lambda}=\tilde{\mu}$, and hence the $p$-weights of $\lambda$ and $\mu$ agree.

\section{Categorification of Fock space}

In the previous section we defined all the data necessary to formulate the action $\g$ on $\PP$.  In this section we prove the main theorem:

\begin{theorem}
\label{mainthm}
Suppose $p\neq2$.  The category $\PP$ along with the data of adjoint functors $\E$ and $\F$, operators $X\in End(\E)$ and $T\in End(\E^2)$, and the weight decomposition $\PP=\bigoplus_{\omega\in P}\PP_{\omega}$ defines a $\g$-categorification which categorifies the Fock space representation of $\g$.  
\end{theorem}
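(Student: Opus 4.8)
The plan is to verify the eight axioms of Definition \ref{DefinitionCategorification} one at a time, organizing the work into three blocks: (a) the weak categorification statement (items 4, 5, 6), which amounts to computing the operators $[\E]$ and $[\F]$ on $K(\PP)$ and matching them with the Fock space action of $\g$; (b) the adjointness and biadjointness statements (items 1, 7), which are already established in Propositions \ref{first-adjunction} and \ref{second-adjunction}; and (c) the Hecke algebra action (items 2, 3, 8), which requires checking the relations $X^{\circ}\in \operatorname{End}(\F)$, the braid/nil-Hecke type relations among $X$ and $T$ on $\E^2$, and that these assemble into an action of $DH_n$ on $\operatorname{End}(\E^n)$.

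For block (a), first I would compute $[\F]$: since $\F(M)=M\otimes I$, on the Weyl-functor basis $[W_\lambda]$ (Corollary \ref{LemmaKgrp}) one has, via the Pieri rule for $V(\lambda)\otimes k^n$ together with Proposition \ref{prop-corr}(ii), that $[\F(W_\lambda)]=\sum_{\lambda\to\mu}[W_\mu]$, matching $f=\sum_i f_i$ on Fock space (using Corollary \ref{lm-dual} to pass freely between $[W_\lambda]$ and $[S_\lambda]$). Next I would compute $[\E]$: from the definition $\E(M)=M^{[2]}_{*,1}(\cdot,k)$ and the isomorphism $S_\lambda(V\oplus k)\simeq\bigoplus S_\mu(V)\otimes S_\nu(k)$ (Cauchy/branching formula for $GL(V)\times GL(k)$-modules applied to the induced module $H^0(\lambda)$, Proposition \ref{prop-corr}(i)), taking the weight-$1$ part of the $GL(k)$-factor picks out exactly the $\nu$ of size $1$, yielding $[\E(S_\lambda)]=\sum_{\mu\to\lambda}[S_\mu]$, matching $e=\sum_i e_i$. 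Then the eigenvalue decomposition $\E=\bigoplus_i\E_i$ of axiom 4 is obtained by showing $X_{M,V}$ acts on the summand of $\E(M)$ indexed by a removable box of content $c$ as the scalar $c\in\Z/p\Z$; this is the content computation governed by Equation (\ref{EqXV}) relating $X_V$ to the Casimir, evaluated on $S_\lambda$ via highest-weight theory, exactly as in \cite{HY11}. Granting this, axioms 4, 5 follow (integrability: each $W_\lambda$ generates a finite-dimensional space under the $[\E_i],[\F_i]$ since the weight spaces of Fock space are finite-dimensional and the relevant $\mathfrak{sl}_2$-strings terminate), and axiom 6 is immediate from (\ref{weight}): adding an $i$-box changes $wt$ by $-\alpha_i$, removing one by $+\alpha_i$, and the block decomposition $\PP=\bigoplus_\omega\PP_\omega$ was built precisely to be compatible with this.

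For block (c), I would first identify $X^{\circ}\in\operatorname{End}(\F)$: under the adjunction $\alpha$ of Proposition \ref{second-adjunction}, $X$ transports to an endomorphism of $\F=-\otimes I$; I expect it is (up to the conventions of Remark \ref{Pedantry}) the map induced by a Casimir-type element, and the key point is that its generalized $a$-eigenspaces $\F_a$ match the $\E_a$ decomposition block-by-block, giving axiom 7's refinement. The main work is axiom 8: one must check that $y_i\mapsto \E^{n-i}X\E^{i-1}$ and $\tau_i\mapsto \E^{n-i-1}T\E^{i-1}$ satisfy the defining relations of $DH_n$ — the commuting relations $\tau_i y_j=y_j\tau_i$ for $|i-j|\ge 1$ and the far-commutation of the $y$'s are formal consequences of functoriality and the disjointness of the $\oplus k$ summands; the braid relation for the $\tau_i$ and $T^2=\operatorname{id}$ follow because $T$ comes from the transposition $\sigma$ on two copies of $k$ inside $M^{[3]}$, so $T$ literally realizes the $\mathfrak{S}_n$-action on $M^{[n+?]}$ permuting the $k$-summands; the only genuinely non-formal relation is the nil-Hecke/degenerate relation $\tau_i y_i - y_{i+1}\tau_i = 1$ of (\ref{HeckeRelation'}), which I would verify by reducing (as in the proof of Proposition \ref{Prop_X}) to the case $M=\otimes^d$, where $X$ and $T$ are given by the explicit formulas of Example \ref{ex-tens} and the flip of tensor factors, and the relation becomes the classical identity in the degenerate affine Hecke algebra acting on $(V\oplus k^{\oplus n})^{\otimes d}$ — this is where $p\neq 2$ re-enters through Equation (\ref{EqXV}). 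Finally, I would assemble all eight verified items and invoke Definition \ref{DefinitionCategorification} to conclude, noting that the identification of the resulting $\g$-module structure on $K(\PP)$ with Fock space is exactly the computation of block (a), with $[S_\emptyset]=[W_\emptyset]$ the highest weight vector of weight $\Lambda_0$. I expect the content/eigenvalue computation for $X$ (the input to axiom 4) and the degenerate relation (\ref{HeckeRelation'}) to be the main obstacles, both being essentially the same Casimir computation in disguise; everything else is either already proved above or formal.
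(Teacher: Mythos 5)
Your proposal is essentially correct and follows the same overall skeleton as the paper: check axioms (1)--(8) of Definition \ref{DefinitionCategorification}, obtain the $\E$-side from a filtration of $\E(W_\lambda)$ (your $GL(V)\times GL(k)$-branching of $H^0(\lambda)$ is the evaluated form of the paper's ABW filtration of $S_\lambda^{[2]}$), compute the $X$-eigenvalues by the Casimir identity (\ref{EqXV}) as in \cite{HY11}, and reduce the degenerate Hecke relation $\tau_i y_i - y_{i+1}\tau_i=1$ to $M=\otimes^d$ exactly as in Proposition \ref{prop-Hecke}. The one genuine divergence is how you get the $[\F_i]$-formula (axiom 5): you propose computing $[\F(W_\lambda)]$ directly by the Pieri rule and then splitting it into $i$-components via an eigenvalue analysis of $X^{\circ}$ on the filtration factors of $M\otimes I$ --- a step you only gesture at (``I expect it is \dots a Casimir-type element''), and which would require a second content computation parallel to Proposition \ref{prop-decomp-E}. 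The paper avoids any analysis of $X^{\circ}$ on filtration factors: it uses the Euler pairing $\left<W_\mu,S_\lambda\right>=\delta_{\mu\lambda}$ (Theorem \ref{portemanteau}(iv)), the adjointness of $[\E_i]$ and $[\F_i]$ with respect to this pairing, and $[W_\lambda]=[S_\lambda]$ (Corollary \ref{lm-dual}) to transfer the known $[\E_i]$-formula to $[\F_i]$ for free; your route is workable but costs an extra computation, while the paper's buys it by duality. (Alternatively, your missing split of $[\F]$ into $[\F_i]$ can be recovered from the general fact that $\F_a$ is adjoint to $\E_a$ together with block considerations.) Two small corrections: the hypothesis $p\neq2$ is used in (\ref{EqXV}) to make $X_V$ basis-independent and $GL(V)\times GL(k)$-equivariant (hence in defining $X$ and in the eigenvalue computation), not in the $\otimes^d$ verification of the Hecke relation; and axiom 6 for $\F_i$ is not quite ``immediate'' --- it follows either from a Weyl filtration of $\F_i(W_\lambda)$ or by adjunction from the statement for $\E_i$, as the paper indicates.
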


\begin{remark}
The theorem is still true for $p=2$.  We only include this hypothesis for ease of exposition (one can prove the $p=2$ case using hyperalgebras instead of enveloping algebras).
\end{remark}

To prove this theorem we must show that the data satisfies properties (4)-(6), (8) of Definition \ref{DefinitionCategorification}, and that the resulting representation of $\g$ on $K(\PP)$ is isomorphic to the Fock space representation (property (7) already appears as Proposition \ref{first-adjunction}).

\subsection{The functors $\E_i$}
In this section we prove property (4) of Definition \ref{DefinitionCategorification}.
For all $a\in k$, and $M\in\PP$ we can form a nested collection of subspaces   of $\E(M)$, natural with respect to $M$:
$$0\subset \E_{a,1}(M) \subset \E_{a,2}(M)\subset \dots \subset \E_{a,n}(M)\subset \dots \subset \E(M) \;,$$
where $\E_{a,n}(M)$ is the kernel of $(X_M-a)^n:\E (M)\to \E(M)$. We define:
$$\E_a(M)=\bigcup_{n\ge 0} \E_{a,n}(M)\;.$$
Since the inclusions $\E_{a,n}(M)\subset \E_{a,n+1}(M)$ are natural with respect to $M$, the assignment $M\mapsto \E_a(M)$ defines a sub-endofunctor of $\E$.

\begin{lemma}\label{lm-aux}
The endofunctor $\E:\PP\to\PP$ splits as a direct sum of its subfunctors $\E_a$:
$$\E=\bigoplus_{a\in k} \E_a\;. $$
Moreover, for all $M\in\PP$ there exists an integer $N$ such that for all $n\ge N$, $\E_a(M)=\E_{a,n}(M)$.
\end{lemma}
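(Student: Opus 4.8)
The plan is to prove the two assertions of Lemma \ref{lm-aux} in reverse order, since the stabilization statement feeds into the direct sum decomposition. First I would fix $M\in\PP$ and observe that $\E(M)$ is an object of $\PP$ of some finite degree (indeed $\E$ is an exact endofunctor of $\PP$, so $\E(M)$ lies in $\PP$ and in fact $\E$ preserves the grading up to a shift, sending $\PP_d$ into $\PP_{d-1}$). For each $a\in k$ the kernels $\E_{a,n}(M)=\ker((X_M-a)^n)$ form an increasing chain of subfunctors of the fixed object $\E(M)\in\PP$. By Corollary \ref{cor-finite-filtration} this chain stabilizes: there is an integer $N=N(M,a)$ with $\E_{a,n}(M)=\E_{a,N}(M)$ for all $n\ge N$. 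This gives the ``moreover'' clause, once we note (next paragraph) that only finitely many $a$ give a nonzero $\E_a(M)$, so a single $N$ works for all $a$ simultaneously.

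Next I would establish the decomposition $\E=\bigoplus_{a\in k}\E_a$ by reducing to a pointwise statement. Fix $V\in\V$ with $\dim V\ge \deg M$ (so that evaluation on $V$ is faithful on the relevant subcategory by Theorem \ref{thm-FS}). The space $\E(M)(V)$ is a finite-dimensional $k$-vector space equipped with the endomorphism $X_{M,V}$. If $k$ were algebraically closed, the generalized eigenspace decomposition would be immediate; in general one must check that the characteristic polynomial of $X_{M,V}$ splits over $k$. The key input here is an explicit computation of the eigenvalues of $X_{M,V}$ on tensor products. Using Example \ref{ex-tens} and the formula $X_V=\sum_{i=1}^n x_{n+1,i}x_{i,n+1}-n$, one computes that on $\E(\otimes^d)(V)=(V^{\otimes d}\otimes k)_{\mathrm{wt}\,1}\subset (V\oplus k)^{\otimes d}$, the operator $X$ acts — after identifying this weight space with $V^{\otimes(d-1)}$ placed in the $d$ tensor slots in all possible ways — essentially as a sum of transposition-type operators, so that its eigenvalues are integers, hence lie in the prime field $\Z/p\Z\subset k$ (when $p=0$, in $\Z\subset k$). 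Since every $M\in\PP_d$ is a subquotient of a finite direct sum of copies of $\otimes^d$ (Remark \ref{rk-tensors}), and $X$ is a morphism of functors, $X_{M,V}$ is a subquotient of $X_{\otimes^d,V}^{\oplus r}$, so its eigenvalues are among those of $X_{\otimes^d,V}$ and in particular the characteristic polynomial splits over the prime field. Therefore $\E(M)(V)=\bigoplus_{a}\ker((X_{M,V}-a)^{\dim})$ as $k[X_{M,V}]$-modules, with the sum over finitely many $a$ in the prime field.

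It then remains to upgrade this pointwise splitting to a splitting in $\PP$. Because the projections onto generalized eigenspaces are polynomials in $X_{M,V}$ (by the Chinese Remainder Theorem applied to $k[t]$), and because $X:\E\to\E$ is a morphism of functors, these projections are natural in $V$ and in $M$; alternatively, since for $n$ large each $\E_{a,n}(M)=\ker((X_M-a)^n)$ is genuinely a subfunctor of $\E(M)$ in $\PP$, the pointwise direct sum decomposition for $\dim V\ge\deg M$ combined with Theorem \ref{thm-FS} shows $\E(M)=\bigoplus_a \E_a(M)$ in $\PP_{\le\deg M}$, and this is compatible as $M$ varies because $X$ is natural. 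Hence $\E=\bigoplus_{a\in k}\E_a$ as endofunctors. I expect the main obstacle to be the eigenvalue computation: one needs to carry out carefully the identification of the weight-$1$ subspace of $(V\oplus k)^{\otimes d}$ and the action of $X_V$ on it, and confirm that all eigenvalues are rational integers reducing into the prime field — everything else (stabilization via Corollary \ref{cor-finite-filtration}, naturality of the generalized-eigenspace projections, and the passage through evaluation on a large vector space) is routine.
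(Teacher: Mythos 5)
Your proposal is correct, and on the stabilization clause it coincides with the paper: the paper's entire proof is two sentences, namely that the generalized eigenspace decomposition is ``standard linear algebra'' and that the finiteness of the chain $\E_{a,n}(M)$ follows from Corollary \ref{cor-finite-filtration}. Where you diverge is the issue you rightly isolate: over a non-algebraically-closed $k$, the sum of the generalized eigenspaces over $a\in k$ exhausts $\E(M)$ only if the minimal polynomial of $X_M$ splits over $k$. The paper does not handle this inside the lemma at all; the fact that only eigenvalues in the prime field occur is established just afterwards, in Proposition \ref{prop-decomp-E}, via the filtration of $S_\lambda^{[2]}$ from \cite{ABW} (giving a Weyl filtration of $\E(W_\lambda)$), the Casimir identity (\ref{EqXV}) to see that $X_{W}$ preserves it, and the scalar, integral action on the graded pieces; the general case then follows from exactness of $\E$ and naturality of $X$ since every $M$ has a finite composition series. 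Your alternative route --- computing $X$ on $\E(\otimes^d)$ directly, reducing to $\otimes^d$ by Remark \ref{rk-tensors}, and upgrading the pointwise splitting to a splitting in $\PP$ via Theorem \ref{thm-FS} and the fact that the eigenprojections are polynomials in the single morphism $X_M\in End_{\PP}(\E(M))$ --- is legitimate and more self-contained, while the paper's route buys the precise combinatorics $\mu\xrightarrow{\,i\,}\lambda$ of the spectrum, which it needs later anyway.

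One caution on the step you yourself flag as the main obstacle: the inference ``$X$ acts as a sum of transposition-type operators, hence its eigenvalues are integers'' is not valid as stated --- an integer matrix, even a sum of permutation-like operators, can have irrational eigenvalues. What makes the conclusion true here is that $X$ on the weight-one subspace of $(V\oplus k)^{\otimes d}$ is a Jucys--Murphy-type element, whose eigenvalues are contents of boxes, or equivalently one can argue via (\ref{EqXV}) and the scalar action of the Casimir on highest-weight constituents, which is in substance what Proposition \ref{prop-decomp-E} does. So your plan goes through, but that computation must genuinely be carried out rather than inferred from integrality of the matrix entries.
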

\begin{proof}
The decomposition as a direct summand of generalized eigenspaces is standard linear algebra. The finiteness of the filtration $(\E_{a,n}(M))_{n\ge 0}$ follows from Corollary \ref{cor-finite-filtration}.
\end{proof}
%The following proposition gives a more accurate description of the endofunctors $\E_a$.

\begin{proposition}\label{prop-decomp-E}Let $\lambda\in\wp$ be a partition of $d$ and set $W=W_{\lambda}$.
\begin{enumerate}
\item The  polynomial functor $\E(W)$ carries a Weyl filtration:
$$0=\E(W)^0\subset \E(W)^1\subset \dots \subset \E(W)^N=\E(W).$$
The composition factors which occur in this filtration are isomorphic to $W_\mu$ for all $\mu$ such that $\mu\longrightarrow \lambda$ and each such factor occurs exactly once.
\item The operator $X_W:\E(W)\to \E(W)$ preserves the filtration of $\E(W)$, and hence it acts on the associated graded object. 
\item Given $0\le i\le N-1$, set $j\in\Z/p\Z$ and $\mu\in\wp$ such that $\E(W)^{i+1}/\E(W)^i\simeq W_\mu$, and $\mu\xrightarrow[]{\;j\;}\lambda$. Then $X_W$ acts on $\E(W)^{i+1}/\E(W)^i$ by multiplication by $j$.
\end{enumerate}
In particular $\E_a=0$ for $a\not\in\Z/p\Z$, and hence 
$$\E=\bigoplus_{i\in\Z/p\Z}\E_i\;.$$
\end{proposition}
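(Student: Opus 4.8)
\textbf{Proof strategy for Proposition \ref{prop-decomp-E}.}

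The plan is to reduce everything to a computation on tensor products, where the enveloping algebra action is explicit (Example \ref{ex-tens}), and then transport the conclusions to Weyl functors via the correspondence with Weyl modules (Proposition \ref{prop-corr}) and highest weight theory. First I would establish (1): the existence of a Weyl filtration of $\E(W_\lambda)$ with factors $W_\mu$ indexed by $\mu \to \lambda$, each appearing once. The cleanest route is to evaluate on a large vector space $V = k^n$ with $n \gg d$, so that $\PP_d \simeq \mathrm{Pol}_d(GL(V))$ by Theorem \ref{thm-FS}, and then $\E(W_\lambda)(V) = W_\lambda(V \oplus k)_1$. By Proposition \ref{prop-corr}, $W_\lambda(V\oplus k) \simeq V(\lambda)$ is the Weyl module for $GL(V\oplus k) = GL_{n+1}$, and restricting to $GL_n \times GL_1$ and taking the weight-$1$ part for the $GL_1$ factor is a classical branching problem: the restriction of the Weyl module $V(\lambda)$ from $GL_{n+1}$ to $GL_n$ has a good/Weyl filtration (this is the characteristic-free branching rule, e.g. in \cite{Jantzen} or via the fact that $GL_n$ is a Levi and $V(\lambda)$ has a Weyl filtration as a $GL_n$-module), with factors $V(\mu)$ over all $\mu$ obtained from $\lambda$ by removing a horizontal strip; the weight-$1$ part of the $GL_1$-grading selects exactly those $\mu$ with $|\mu| = d-1$, i.e. $\mu \to \lambda$, each once. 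Pulling this back through the equivalence gives the Weyl filtration of $\E(W_\lambda)$ in $\PP_{d-1}$. Naturality of the filtration in the functor category can be arranged since the branching filtration of a Weyl module is canonical (it is the radical-type filtration coming from the $GL_n \times GL_1$-socle, or one can appeal to Corollary \ref{cor-finite-filtration} to get a finite filtration and Theorem \ref{portemanteau}(iv) to pin down the factors).

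Next, for (2) and (3), I would show $X_W$ preserves this filtration and compute its action on each graded piece. Since $X_{M,V}$ is natural in $M$, and $W_\lambda$ embeds (or surjects, after dualizing) into tensor-product functors, I would first understand $X_{\otimes^d, V}$ explicitly. Writing $V\oplus k = \bigoplus_{i=1}^{n+1} ke_i$ with $e_{n+1}$ spanning the last $k$, from $X_V = \sum_{i=1}^n x_{n+1,i}x_{i,n+1} - n$ and Example \ref{ex-tens}, the operator $X_{\otimes^d, V}$ acts on $(V\oplus k)^{\otimes d}$, and on the weight-$1$ subspace $\E(\otimes^d)(V) \subset (V\oplus k)^{\otimes d}$ — spanned by tensors with exactly one tensor-factor equal to $e_{n+1}$ — it acts, after a short computation, essentially as the Jucys–Murphy-type element measuring the position of the $e_{n+1}$-slot relative to the others. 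More conceptually: $X_V$ is, up to scalars and central terms, the difference of Casimirs $C_{V\oplus k} - C_V$ (Equation \ref{EqXV}), and the Casimir acts on the highest-weight space of $V(\mu) \subset V(\lambda)|_{GL_n}$ by a scalar depending only on $\langle \mu, \mu + 2\rho\rangle$; the difference $\tfrac12(\langle\lambda,\lambda+2\rho\rangle - \langle\mu,\mu+2\rho\rangle)$ for $\mu \to \lambda$ with the added box in position $(k,\ell)$ works out to exactly the content $\ell - k$, i.e. $j \in \Z/p\Z$. So $X_W$ acts on the top of $V(\mu)$ — hence, since $X_W$ is $GL_n$-equivariant and $V(\mu)$ is generated by its highest weight line, on all of the subquotient $W_\mu$ — by the scalar $j$. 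That $X_W$ preserves the filtration then follows because it is $GL_n \times GL_1$-equivariant and the filtration steps are the isotypic-type pieces distinguished by these Casimir eigenvalues (here I would use $p \ne 2$ and Equation \ref{EqXV} to pass between $X_V$ and the Casimirs cleanly). The final assertion, $\E_a = 0$ for $a \notin \Z/p\Z$ and $\E = \bigoplus_{i\in\Z/p\Z}\E_i$, is then immediate from Lemma \ref{lm-aux}: by Corollary \ref{LemmaKgrp} the $[W_\lambda]$ span $K(\PP)$ and every $W_\lambda$ has $X_{W_\lambda}$ acting with all generalized eigenvalues in $\Z/p\Z$, and an induction on filtration length (using that the class of an arbitrary $M$ is a combination of $[W_\lambda]$'s, and that $\E_a$ is exact) shows $\E_a(M) = 0$ for $a \notin \Z/p\Z$ for every $M$.

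\textbf{Main obstacle.} The delicate point is not the Casimir eigenvalue bookkeeping (routine, modulo care with $\rho$ and the normalization in \ref{EqXV}) but rather making the Weyl filtration of $\E(W_\lambda)$ \emph{functorial} — i.e. lifting the module-level branching filtration to a filtration by subfunctors that is natural in $\lambda$ in a way compatible with $X$ — and simultaneously showing $X$ respects it. I would handle this by not trying to write the filtration down by hand: instead, use Theorem \ref{portemanteau}(iv) ($\mathrm{Ext}^i_\PP(W_\mu, S_\lambda) = k$ iff $\mu=\lambda, i=0$, else $0$) to show abstractly that any functor with the right composition factors (which $\E(W_\lambda)$ has, by the Grothendieck-group computation $[\E(W_\lambda)] = \sum_{\mu\to\lambda}[W_\mu]$ from the tensor-product case) and whose factors are multiplicity-free and totally ordered automatically carries a Weyl filtration, and that on it $X$ — being a natural endomorphism commuting with the relevant central elements — must act by scalars on the graded pieces. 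The identification of those scalars with contents is then the one genuine calculation, pinned down on tensor products and propagated by naturality.
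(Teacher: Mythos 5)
Your main line of argument is correct, and for parts (2)--(3) it is essentially the paper's own proof: the paper also reduces $X$ to the difference of Casimirs via Equation (\ref{EqXV}), uses that Weyl modules are highest weight modules so that $C_{V\oplus k}$ acts by a scalar on $W_\lambda(V\oplus k)$ and $C_V$ by a scalar on each factor, and identifies the scalar with the content of the added box (the paper delegates this last computation to \cite[Lemma 5.7(1)]{HY11}). For part (1) you take a genuinely different, though anticipated, route: the paper's primary argument is functorial from the start --- it applies the Akin--Buchsbaum--Weyman filtration of the bifunctor $S_\lambda^{[2]}$ (Theorem II.4.11 of \cite{ABW}), extracts the bidegree $(*,1)$ part, evaluates the second variable at $k$, and then applies Kuhn duality $^\sharp$ to pass from Schur to Weyl functors --- whereas you evaluate on a large $V$ and invoke the characteristic-free $GL_{n+1}\downarrow GL_n\times GL_1$ branching rule for Weyl modules, pulling the filtration back through the equivalence of Theorem \ref{thm-FS}. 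The paper explicitly records your route as the alternative proof (via \cite{Martin}, \cite[Thm.~8.1.1]{GW} and \cite[Lemma A.3]{HY11}), so both are legitimate; the ABW route buys a filtration that exists at the level of functors with no appeal to the equivalence, while yours buys a more classical and self-contained combinatorial statement.

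Two points in your write-up would not survive as stated and should be repaired, though neither is fatal because the ingredients for the correct argument are already on your page. First, in your ``main obstacle'' paragraph you propose to deduce the Weyl filtration abstractly from the Grothendieck-group identity $[\E(W_\lambda)]=\sum_{\mu\to\lambda}[W_\mu]$ together with multiplicity-freeness; this is false in general (the semisimplification of $\E(W_\lambda)$ has the same class and no Weyl filtration), and the cohomological criterion for $\Delta$-filtrations requires $Ext^1$-vanishing against all costandard objects, not a $K$-theoretic condition. Fortunately the obstacle you are trying to circumvent does not exist: since evaluation on $V$ with $\dim V\ge d$ is an equivalence $\PP_{d-1}\simeq Pol_{d-1}(GL(V))$, the module-level branching filtration transports directly to a filtration by subfunctors, and no naturality in $\lambda$ is needed for the proposition. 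Second, your justification that $X_W$ preserves the filtration --- ``$GL_n\times GL_1$-equivariance plus the filtration steps being isotypic-type pieces distinguished by Casimir eigenvalues'' --- does not work in characteristic $p$: an equivariant endomorphism need not preserve a prescribed submodule, and distinct addable boxes can have equal contents modulo $p$, so the eigenvalues do not separate the steps. The correct argument (the paper's) needs no separation: by (\ref{EqXV}), $X_{W,V}$ differs from $\tfrac12(C_{V\oplus k}-C_V)$ by operators acting as scalars on the relevant weight space; $C_{V\oplus k}$ acts by a single scalar on all of $W_\lambda(V\oplus k)$ because it is a highest weight module, and $C_V\in U(\mathfrak{gl}(V))$ automatically preserves every $GL(V)$-submodule, hence every step of the filtration.
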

\begin{proof}
Theorem II.4.11 of  \cite{ABW} yields a filtration of the bifunctor $S_\lambda^{[2]}$ with associated graded object $\bigoplus_{\alpha\subset \lambda}S_\alpha\boxtimes S_{\lambda/\alpha}$. Here, $S_{\lambda/\alpha}\in \PP_{|\lambda|-|\alpha|}$ refers to the Schur functor associated to the skew partition $\lambda/\alpha$ and $S_\alpha\boxtimes S_{\lambda/\alpha}$ is the homogeneous bifunctor of bidegree $(|\alpha|,|\lambda|-|\alpha|)$, defined by
$(V,U)\mapsto S_\alpha(V)\otimes S_{\lambda/\alpha}(U)$.
Thus, $(S_\lambda^{[2]})_{*,1}$ has a filtration whose graded object is the sum of the $S_\alpha\boxtimes S_{\lambda/\alpha}$ with $|\lambda|=|\alpha|+1$. In this case, $S_{\lambda/\alpha}$ is the identity functor of $\V$ by definition. Thus, taking $U=k$, we get a filtration of $\E(S_\lambda)$ whose graded object is $\bigoplus S_\alpha$, for all $\alpha\rightarrow \lambda$. The first part of the proposition follows by duality $^\sharp$. (For an alternative proof based on \cite{Martin} and \cite[Thm. 8.1.1]{GW}, see \cite[Lemma A.3]{HY11}.)

For any $V\in\V$, by (\ref{EqXV}) the map $X_{W,V}$ preserves the filtration of $GL(V)$-modules:
$$0=\E(W)^0(V)\subset \E(W)^1(V)\subset \dots \subset \E(W)^N(V)=\E(W)(V).$$ Indeed, since Weyl modules are highest weight modules, $C_{V\oplus k}$ acts on $W(V\oplus k)$ by scalar, and $C_V$ acts on the factors of the filtration by scalar as well.  Therefore $X_W$ preserves the filtration of $\E(W)$, proving the second part of the proposition.
 
Finally, let $\mu$ and $j$ be chosen as in the third part of the proposition.  By Lemma 5.7(1) in \cite{HY11}, for any $V\in \V$, $X_{W,V}$ acts by $j$ on $\E(W)^{i+1}(V)/\E(W)^i(V)$.  Therefore $X_W$ acts on $\E(W)^{i+1}/\E(W)^i$ also by $j$.

\end{proof}

By the adjunction of $\E$ and $\F$ and the Yoneda Lemma, the operator $X\in End(\E)$ induces an operator $X^{\circ}\in End(\F)$.  The generalized eigenspaces of this operator produce subfunctors $\F_a$ of $\F$, which, by general nonsense, are adjoint to $\E_a$.  Therefore we have decompositions
$$
\E=\bigoplus_{i\in\Z/p\Z}\E_i,
\F=\bigoplus_{i\in\Z/p\Z}\F_i.
$$

\subsection{The action of $\g$ on $K(\PP)$}
In this section we prove property (5) of Definition \ref{DefinitionCategorification}. The functors $\E_i,\F_i$, being exact functors, induce linear operators
 $$
 [\E_i],[\F_i]:K(\PP) \to K(\PP)
 $$
for all $i\in\Z/p\Z$.  Define a map $\varkappa: \g\to End(K(\PP))$ by $e_i\mapsto [\E_i]$ and $f_i\mapsto [\F_i]$.  Let $\Psi:K(\PP)\to\FF$ be given by $\Psi([W_{\lambda}])=v_{\lambda}$.    
\begin{proposition}
The map $\varkappa$ is a representation of $\g$ and $\Psi$ is an isomorphism of $\g$-modules.
\end{proposition}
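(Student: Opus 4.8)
The plan is to verify that $\varkappa$ respects the defining relations of $\g$ by transporting everything to Fock space via $\Psi$, and simultaneously to check that $\Psi$ is a bijection intertwining the two actions. First I would recall from Corollary \ref{LemmaKgrp} that $\{[W_\lambda]\}_{\lambda\in\wp}$ is a $\Z$-basis of $K(\PP)$, so $\Psi$ is by construction a vector space isomorphism onto $\FF=\bigoplus_\lambda \C v_\lambda$; it remains to show the $\g$-module structures match. For the $\F_i$ side this is almost immediate: by Corollary \ref{lm-dual} we have $[\F(W_\lambda)]=[W_\lambda\otimes I]=[S_\lambda\otimes I]$, and one identifies $[S_\lambda\otimes I]=\sum_{\lambda\to\mu}[S_\mu]=\sum_{\lambda\to\mu}[W_\mu]$ using the ``Pieri rule'' / the analogue of Proposition \ref{prop-decomp-E} for tensoring with $I$ (a filtration of $S_\lambda\otimes I\simeq S_\lambda\otimes S_{(1)}$ with graded pieces $S_\mu$, $\lambda\to\mu$). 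Decomposing into $X^\circ$-eigenspaces and using that the eigenvalue attached to $\lambda\xrightarrow{j}\mu$ is $j\in\Z/p\Z$ (this is the adjoint version of Proposition \ref{prop-decomp-E}(3)), we get $[\F_i](W_\lambda)=\sum_{\lambda\xrightarrow{i}\mu}[W_\mu]$, which matches $f_i\cdot v_\lambda$ exactly.

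Next I would treat the $\E_i$ side using Proposition \ref{prop-decomp-E} directly: $\E(W_\lambda)$ has a Weyl filtration with graded pieces $W_\mu$ for $\mu\to\lambda$ each occurring once, and $X_W$ acts on the piece corresponding to $\mu\xrightarrow{j}\lambda$ by the scalar $j$. Hence $[\E_i](W_\lambda)=\sum_{\mu\xrightarrow{i}\lambda}[W_\mu]$, matching $e_i\cdot v_\lambda$. At this point $\Psi\circ[\E_i]=e_i\circ\Psi$ and $\Psi\circ[\F_i]=f_i\circ\Psi$ as operators on the basis, hence as operators. Since the Fock space formulas are known (cf.\ the discussion around \eqref{weight} and \cite{LLT}) to define an honest representation of $\g$, it follows formally that $\varkappa(e_i)=[\E_i]$, $\varkappa(f_i)=[\F_i]$ satisfy the Serre relations and all other relations of $\g$ (they are conjugate via the isomorphism $\Psi$ to operators that do): for any relation $r(e_i,f_i)=0$ in $\g$, $\Psi\, r([\E_i],[\F_i])\,\Psi^{-1}=r(e_i,f_i)=0$ on $\FF$, so $r([\E_i],[\F_i])=0$. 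Thus $\varkappa$ is a representation and $\Psi$ is an isomorphism of $\g$-modules.

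The main obstacle is the precise bookkeeping of eigenvalues and the two ``branching'' computations — establishing $[\E(W_\lambda)]=\sum_{\mu\to\lambda}[W_\mu]$ with the correct content labels, and the corresponding statement for $\F$. For $\E$ this is exactly Proposition \ref{prop-decomp-E}, so it is already in hand; for $\F$ one needs the dual/adjoint statement, which I would deduce either from the ABW filtration of $S_\lambda\otimes S_{(1)}$ together with the identification of $X^\circ$ as the adjoint of $X$ (so its eigenvalue on the $\lambda\xrightarrow{j}\mu$ piece is again $j$), or simply by invoking biadjointness: $\E_i$ and $\F_i$ are biadjoint, and in $K_0$ of a category with a duality fixing simples (Corollary \ref{lm-dual}) the operators $[\E_i]$ and $[\F_i]$ are mutually transpose with respect to the symmetric bilinear form for which $\{[W_\lambda]\}$ is orthonormal — so the branching for $\F_i$ is forced by that for $\E_i$. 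One should also note the harmless indexing subtlety that the Hecke relation used is \eqref{HeckeRelation'} rather than \eqref{HeckeRelation} (Remark \ref{Pedantry}), which only transposes the roles of the two orderings and does not affect this proposition. Everything else is routine linear algebra, so the proposition follows.
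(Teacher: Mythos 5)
Your proposal is correct in substance, and half of it is literally the paper's proof: $\Psi$ is a linear isomorphism by Corollary \ref{LemmaKgrp}, the branching $[\E_i][W_\lambda]=\sum_{\mu\xrightarrow{i}\lambda}[W_\mu]$ is read off from Proposition \ref{prop-decomp-E}, and the final transport-of-structure step (relations of $\g$ hold because the operators are conjugate via $\Psi$ to the known Fock space action) is exactly how the paper concludes. The only real difference is how the $\F_i$-branching is obtained. The paper does it purely in $K$-theory: it introduces the Euler form $\left<M,N\right>=\sum_{i\ge 0}(-1)^i\dim Ext^i_{\PP}(M,N)$, observes that $[\E_i]$ and $[\F_i]$ are adjoint for this form (the adjunction $(\E,\F)$ passes to $Ext$ because both functors are exact and biadjoint), and that $\left<W_\lambda,S_\mu\right>=\delta_{\lambda\mu}$ by Theorem \ref{portemanteau}(iv); combined with $[W_\lambda]=[S_\lambda]$ (Corollary \ref{lm-dual}) this forces $[\F_i][S_\lambda]=\sum_{\lambda\xrightarrow{i}\mu}[S_\mu]$. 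This is precisely your second (``biadjointness/transpose'') option, except that you should name the form: a form decreed to make $\{[W_\lambda]\}$ orthonormal has no transpose property by fiat --- it must be the Euler form, where orthonormality comes from Theorem \ref{portemanteau}(iv) plus Corollary \ref{lm-dual} and the transpose property from the adjunction on $Ext$. Your first option (Pieri filtration of $S_\lambda\otimes I$ together with the assertion that $X^{\circ}$ acts by $j$ on the factor indexed by $\lambda\xrightarrow{j}\mu$) is plausible but costs more: that eigenvalue statement is not ``the adjoint version of Proposition \ref{prop-decomp-E}(3)'' for free, and would require its own Casimir-type computation for $X^{\circ}$ on $M\otimes I$, which the paper never carries out. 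So if you keep the first route you owe that lemma; if you take the second route, once the Euler form is made explicit, you have reproduced the paper's argument.
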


\begin{proof}
By Corollary \ref{LemmaKgrp} $\Psi$ is a linear isomorphism.  By Proposition \ref{prop-decomp-E}, 
$$
[\E_i]([W_{\lambda}])=\sum_{\xymatrix{\mu \ar[r]^i& \lambda}}[W_{\mu}].
$$ Therefore $\Psi$ intertwines the operators $e_i$ and $[\E_i]$, i.e. $\Psi \circ [\E_i]=e_i \circ \Psi$.  Consider the bilinear form on $K(\PP)$ given by:
$$
\left< M,N\right>=\sum_{i\geq0}(-1)^idimExt^i(M,N)
$$
By adjunction $[\E_i]$ and $[\F_i]$ are adjoint operators with respect to $\left<\cdot,\cdot\right>$, and by Theorem \ref{portemanteau}(iv), $\left< W_{\lambda},S_{\mu}\right>=\delta_{\lambda\mu}$.  Therefore
 $$
[\F_i]([S_{\lambda}])=\sum_{\xymatrix{\lambda \ar[r]^i& \mu}}[S_{\mu}].
$$     
Hence, by Corollary \ref{lm-dual}, $\Psi$ also intertwines the operators $f_i$ and $[\F_i]$.  Both claims of the proposition immediately follow.
\end{proof}    

\subsection{Chevalley functors and weight decomposition of $\PP$}

In this section we prove property (6) of Definition \ref{DefinitionCategorification}.

\begin{proposition}
Let $\omega\in P$.  For every $i\in\Z/p\Z$, the functors $\E_i,\F_i:\PP\to\PP$ restrict to
$
\E_i:\PP_{\omega}\to\PP_{\omega+\alpha_i}$ and $
\F_i:\PP_{\omega}\to\PP_{\omega-\alpha_i}.
$
\end{proposition}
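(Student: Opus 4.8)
The plan is to reduce the statement to the already-established action of the Chevalley generators on the Grothendieck group together with the block decomposition indexed by weights. First I would recall that $\PP_\omega$ is, by definition, the sum of blocks $\PP_\Theta$ whose composition factors $L_\mu$ satisfy $wt(\mu)=\omega$; equivalently, by the Theorem of \cite{D} and \eqref{KleshEqua}, $\PP_\omega$ is the Serre subcategory generated by the $L_\mu$ (equivalently the $W_\mu$, or the $S_\mu$) with $wt(\mu)=\omega$. Since $\E_i$ and $\F_i$ are exact, to show $\E_i(\PP_\omega)\subset\PP_{\omega+\alpha_i}$ it suffices to check it on a generating set, and by Corollary \ref{LemmaKgrp} (and the fact that $\E_i$ sends a Weyl-filtered object to a Weyl-filtered object, cf. Proposition \ref{prop-decomp-E}) it is enough to treat $M=W_\lambda$ with $wt(\lambda)=\omega$.

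The key computation is then immediate from Proposition \ref{prop-decomp-E}: $\E(W_\lambda)$ carries a Weyl filtration with factors $W_\mu$, one for each $\mu\xrightarrow{\ }\lambda$, and $X_{W_\lambda}$ acts on the factor indexed by $\mu\xrightarrow{j}\lambda$ by the scalar $j$. Hence $\E_i(W_\lambda)$ is precisely the sub/quotient of $\E(W_\lambda)$ assembled from those factors $W_\mu$ with $\mu\xrightarrow{i}\lambda$. But if $\mu\xrightarrow{i}\lambda$ then $\lambda$ is obtained from $\mu$ by adding an $i$-box, so by the weight formula \eqref{weight}, $wt(\lambda)=wt(\mu)-\alpha_i$, i.e. $wt(\mu)=\omega+\alpha_i$. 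Therefore every Weyl factor of $\E_i(W_\lambda)$ lies in $\PP_{\omega+\alpha_i}$, and since $\PP_{\omega+\alpha_i}$ is a Serre subcategory, $\E_i(W_\lambda)\in\PP_{\omega+\alpha_i}$. Running the analogous argument with $S_\lambda$ in place of $W_\lambda$ and using the dual filtration (Theorem \ref{portemanteau} and Corollary \ref{lm-dual}) covers a generating set on which one can also verify the $\F_i$ statement, or alternatively one deduces the $\F_i$ claim directly from the $\E_i$ claim by the biadjunction of Propositions \ref{first-adjunction} and \ref{second-adjunction}: adjoint functors between block-decomposed categories are "block-transpose" of one another, so $\F_i$ being right (and left) adjoint to $\E_i$ forces $\F_i(\PP_{\omega+\alpha_i})\subset\PP_\omega$, which is the desired statement after reindexing.

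The only genuinely delicate point is the passage from "the statement holds for the generators $W_\lambda$ (or $S_\lambda$)" to "the statement holds for all objects of $\PP_\omega$". This is not quite formal because $\PP_\omega$ is not literally the additive (or even extension) closure of the $W_\lambda$ in an obvious way — rather one must use that $\PP_\omega$ is a \emph{block}, hence a Serre subcategory which is also a direct summand of $\PP$, so that for any $M\in\PP_\omega$ there is a surjection from a finite direct sum of $W_\lambda$'s with $wt(\lambda)=\omega$ onto $M$ (projectivity of appropriate objects, cf. Remark \ref{rk-tensors} and Theorem \ref{thm-FS}), and $\E_i$, being exact, sends this presentation to a presentation of $\E_i(M)$ by objects of $\PP_{\omega+\alpha_i}$; since the latter is Serre, $\E_i(M)\in\PP_{\omega+\alpha_i}$. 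I expect this bookkeeping with blocks versus Serre subcategories to be the main obstacle to a fully rigorous write-up; the representation-theoretic content is entirely contained in Proposition \ref{prop-decomp-E} and the weight formula \eqref{weight}.
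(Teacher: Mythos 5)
Your representation-theoretic core is exactly the paper's: Proposition \ref{prop-decomp-E} gives the Weyl filtration of $\E(W_\lambda)$ on whose factor indexed by $\mu\xrightarrow{j}\lambda$ the operator $X$ acts by $j$, so $\E_i(W_\lambda)$ is assembled from $W_\mu$ with $wt(\mu)=wt(\lambda)+\alpha_i$; and your adjunction argument for $\F_i$ is valid and arguably cleaner than the paper's ``entirely analogous'': for $N\in\PP_{\omega'}$ and a block component $M=(\F_i N)_{\omega''}$ with $\omega''\neq\omega'-\alpha_i$, one has $Hom_\PP(M,\F_i N)\simeq Hom_\PP(\E_i M,N)=0$ because $\E_i M\in\PP_{\omega''+\alpha_i}$, yet this Hom contains the inclusion of the summand, forcing $M=0$. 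The problem is the patch you offer for the step you yourself flag as delicate. It is \emph{not} true that every $M\in\PP_\omega$ receives a surjection from a finite direct sum of Weyl functors $W_\lambda$ with $wt(\lambda)=\omega$: the projective generator of a block comes from the $\Gamma^\lambda$ (Remark \ref{rk-tensors}), not from Weyl functors, and if $P$ is an indecomposable projective of a non-semisimple block which is not itself a Weyl functor (these exist for $p>0$, e.g. the projective cover of $L_\mu$ for $\mu$ non-maximal in its block), then any surjection $\bigoplus W_{\mu_i}\twoheadrightarrow P$ would split and Krull--Schmidt would force $P\simeq W_{\mu_i}$ for some $i$, a contradiction. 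So the reduction as you justify it fails.

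The reduction needs no such surjection, and the paper's route repairs it. Since $\E_i$ is exact and each block is closed under subobjects, quotients and extensions, membership of $\E_i(M)$ in $\PP_{\omega+\alpha_i}$ depends only on the composition factors of $M$; hence it suffices to treat $M=L_\lambda$ with $wt(\lambda)=\omega$. Now block membership of an object is readable off its class in $K_0$ (it says all composition factors lie in the given block, and simple classes are a basis). By the unitriangularity used in Corollary \ref{LemmaKgrp}, the classes $[W_\nu]$ with $wt(\nu)=\omega$ span the same subspace of $K_0$ as the $[L_\mu]$ with $wt(\mu)=\omega$, so $[L_\lambda]$ is an integral combination of such $[W_\nu]$; applying $[\E_i]$ and using that each $[\E_i(W_\nu)]$ is a sum of classes supported in $\PP_{\omega+\alpha_i}$, one gets $[\E_i(L_\lambda)]$ supported in $\PP_{\omega+\alpha_i}$, hence $\E_i(L_\lambda)\in\PP_{\omega+\alpha_i}$. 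With this replacement (which is what the paper means by ``the same idea as used in the proof of Corollary \ref{LemmaKgrp}''), your write-up coincides with the paper's proof, up to your alternative treatment of $\F_i$ by adjunction.
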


\begin{proof}

We prove that $\E_i(\PP_\omega)\subset\PP_{\omega+\alpha_i}$ (the proof for $\F_i$ being entirely analogous).  Since $\E_i$ is exact it suffices to prove that if $L_\lambda\in\PP_\omega$ then $\E_i(L_\lambda)\in\PP_{\omega+\alpha_i}$.  Then, by the same idea as used in the proof of Lemma \ref{LemmaKgrp}, it suffices to show that if $W_\lambda\in\PP_\omega$ then $\E_i(W_\lambda)\in\PP_{\omega+\alpha_i}$. By Proposition \ref{prop-decomp-E}, $\E_i(W_\lambda)$ has a Weyl filtration with factors all of the form $W_\mu$, where $\xymatrix{\mu \ar[r]^i& \lambda}$.  But then $\mu\in\omega+\alpha_i$, so $W_\mu\in\PP_{\omega+\alpha_i}$.  Therefore $\E_i(W_\lambda)\in\PP_{\omega+\alpha_i}$.   
\end{proof}

\subsection{The degenerate affine Hecke algebra action on $\E^n$}

In this section we prove property (8) of Definition \ref{DefinitionCategorification}.

\begin{proposition}\label{prop-Hecke}
The assignments
\begin{equation*}
y_i \mapsto \E^{n-i}X\E^{i-1} \text{ for }1 \leq i \leq n,
\end{equation*}
\begin{equation*}
\tau_i \mapsto \E^{n-i-1}T\E^{i-1} \text{ for }1 \leq i \leq n-1.
\end{equation*}
define an action of $DH_n$ on $End(\E^n)$.
\end{proposition}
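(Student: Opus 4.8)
The plan is to verify the defining relations of $DH_n$ directly for the proposed operators on $\mathrm{End}(\E^n)$. There are three families of relations to check: (a) the polynomial generators $y_i$ commute with each other; (b) the symmetric group relations among the $\tau_i$ (braid relations and $\tau_i^2=1$); and (c) the mixed relations $\tau_i y_j = y_j\tau_i$ for $|i-j|\ge 1$ and $\tau_i y_i - y_{i+1}\tau_i = 1$ (using presentation~(\ref{HeckeRelation'}), as flagged in Remark~\ref{Pedantry}). First I would record the "locality" principle: $\E^{n-i}X\E^{i-1}$ and $\E^{n-j}X\E^{j-1}$ are built by applying $X$ in two disjoint ``slots'' of the iterated functor $\E^n$, and similarly $T$ acts in two adjacent slots; whenever the slots involved in two such operators are disjoint, the operators automatically commute by functoriality (interchange law for horizontal/vertical composition of natural transformations). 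This immediately gives (a) in full, the commutations $\tau_i\tau_j=\tau_j\tau_i$ for $|i-j|\ge 2$, and the first mixed relation $\tau_iy_j=y_j\tau_i$ for $|i-j|\ge 1$.

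The remaining relations are genuinely ``local'', involving at most three consecutive slots, so by the same functoriality reduction it suffices to prove them for $\E^2$ (the relation $\tau_i^2=1$ and $\tau y - y'\tau = 1$) and for $\E^3$ (the braid relation $T_1T_2T_1 = T_2T_1T_2$, where $T_1 = T\E$, $T_2 = \E T$). For these I would unwind the definitions: $\E^n(M) = M^{[n+1]}_{*,1,\dots,1}(\cdot, k,\dots,k)$, and $T$ is induced by the transposition $\sigma$ swapping two of the $k$-summands while $X$ is induced by the element $X_V\in U(\mathfrak{gl}(V\oplus k))$. Thus $T_1, T_2$ on $\E^3(M)(V) \subset M(V\oplus k\oplus k\oplus k)$ are induced by the two adjacent transpositions generating the symmetric group $\mathfrak{S}_3$ permuting the three copies of $k$; the relation $\tau^2=1$ and the braid relation then follow formally from the corresponding relations in $\mathfrak{S}_3$ together with functoriality of $M^{[n+1]}$. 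The only relation with real content is $\tau_i y_i - y_{i+1}\tau_i = 1$ on $\E^2$, equivalently $T\circ (X\E) - (\E X)\circ T = 1_{\E^2}$ as endomorphisms of $\E^2$.

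The main obstacle, then, is this last identity: it asserts a commutator-type relation between the transposition-induced map $T$ and the two copies of the operator $X$, which is ultimately a computation inside $U(\mathfrak{gl}(V\oplus k\oplus k))$. The strategy is the reduction already used in Proposition~\ref{Prop_X}: both sides are natural in $M$, so it suffices to check the identity on the generators $\otimes^d$ of $\PP_d$ (Remark~\ref{rk-tensors}), where by Example~\ref{ex-tens} the action of $U(\mathfrak{gl}(V\oplus k\oplus k))$ is the explicit sum-over-tensor-factors formula. On $(\otimes^d)(V\oplus k\oplus k)$ one then computes: writing $e_{n+1}, e_{n+2}$ for the basis vectors of the two copies of $k$, the operator $X\E$ corresponds (on the appropriate weight space) to $\sum_i x_{n+2,i}x_{i,n+2}$ summed over $i\le n+1$ minus a scalar, $\E X$ to the analogous expression with roles of $n+1,n+2$ reversed, and $T$ to the permutation exchanging $e_{n+1}\leftrightarrow e_{n+2}$ in each tensor slot; expanding $T(X\E) - (\E X)T$ on the weight-$(*,1,1)$ subspace, all terms not involving the transposed indices cancel, and the surviving terms collapse — exactly as in the classical computation for the degenerate affine Hecke algebra acting on tensor space — to the identity. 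This matches the normalization fixed by the $-n$ correction term in the definition of $X_V$ and by the constant $2n$ appearing in~(\ref{EqXV}); I would double-check that the sign/shift conventions are the ones yielding presentation~(\ref{HeckeRelation'}) rather than~(\ref{HeckeRelation}), consistently with Remark~\ref{Pedantry}. Granting this computation, all $DH_n$ relations hold and the proposition follows.
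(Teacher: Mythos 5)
Your proposal follows essentially the same route as the paper: the symmetric group and easy commutation relations are handled by functoriality/reduction to $\otimes^d$, and the only substantive relation, $\tau_i y_i-y_{i+1}\tau_i=1$, is reduced to an identity in $End(\E^2)$, which is checked on the generators $\otimes^d$ by naturality (Remark \ref{rk-tensors}) and the explicit $U(\mathfrak{gl}(V\oplus k\oplus k))$-action of Example \ref{ex-tens} --- exactly the paper's argument. One caveat, which is precisely the convention check you flagged: with the paper's convention that $\E^{n-i}X\E^{i-1}$ means ``apply $\E^{n-i}$ to $X_{\E^{i-1}(M)}$'', one has $y_1=\E X=\E(X_M)$ (the operator at the inner copy of $k$, i.e. the sum $\sum_{j\le n+1}x_{n+2,j}x_{j,n+2}-(n+1)$) and $y_2=X\E=X_{\E M}$ (the $n$-term sum at index $n+1$), so the identity to verify is $T\circ\E X-X\E\circ T=1$ rather than $T\circ X\E-\E X\circ T=1$ as you wrote; your concrete description of the two operators shows you are computing the correct commutator, just with the labels $\E X$ and $X\E$ interchanged relative to the paper.
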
 
\begin{proof}
By definition, $\E^n(M)(V)$ is the subspace of $M(V\oplus k^{n})$ formed by the vectors of weight $\varpi_{n}=(1,1,\dots,1)$ for the action of $GL(k)^{\times n}$.  Here $GL(k)^{\times n}$ acts via the composition:
$$GL(k)^{\times n}=1_V\times GL(k)^{\times n}\subset GL(V\oplus k^{n})\xrightarrow[]{\pi_{M,V\oplus k^{n}}}GL(M(V\oplus k^{n}))\;. $$ 

The map $(\tau_{n-i})_{M,V}$ is equal to the restriction of $M(t_i)$ to $\E^n(M)(V)$, where $t_i:V\oplus k^{n}\to V\oplus k^{n}$ maps $(v,x_1,\dots,x_n)$ to $(v,x_1,\dots, x_{i+1},x_i,\dots,x_n)$. To check that the $\tau_i$ define an action of $\Z\mathfrak{S}_n$ on $\E^n$, we need to check that the $(\tau_i)_{M,V}$ define an action of the symmetric group on $\E^n(M)(V)$.  By Remark \ref{rk-tensors} it suffices to check this for $M=\otimes^d$, and this is a straightforward computation. Moreover, it is also straightforward from the definition that the $y_i$ commute with each other. Thus they define an action of the polynomial algebra $\Z[y_1,\dots,y_n]$ on $\E^n$. Similarly, $\tau_i$ and $y_j$ commute with each other if $|i-j|\ge 1$.

So, to obtain the action of the Hecke algebra on $End(\E^n)$, it remains to show that $\tau_iy_i-y_{i+1}\tau_i=1$ (see Remark \ref{Pedantry}).
%$\tau_iy_{i+1}- y_i\tau_i = 1 this was the original relation we had$.
%$y_{i+1}\tau_i-\tau_iy_i=1 is the version of the relation in CR$ 
This will be proved by showing the following identity in $End(\E^2)$:
\begin{equation}
\label{EqHecke}
 T\circ \E X - X\E \circ T=1. 
\end{equation}
To check (\ref{EqHecke}), it suffices to check that for all $M\in\PP$ and all $V\in\V$, 
\begin{equation}
\label{EqD}
T_{M,V}\circ\E(X_M)_V-X_{\E(M)}\circ T_{M,V}=1_{\E^2(M)(V)}
\end{equation}

%diagram (D) below is commutative.
%$$\xymatrix{
%\E^2(M)(V)\ar[rrr]^-{(T_{M,V},1_{\E^2(M)(V)})}\ar[d]_{X_{\E(M),V}}&&& \E^2(M)(V)\oplus %\E^2(M)(V)\ar[d]^-{\E(X_{M})_V+1_{\E^2(M)(V)}} \\ 
%\E^2(M)(V)\ar[rrr]_{T_{M,V}}&&& \E^2(M)(V)
%}\qquad (D)$$

If (\ref{EqD}) holds for $M\in\PP$, then by naturality with respect to $M$, it also holds for direct sums $M^{\oplus n}$, for subfunctors $N\subset M$, and quotients $M\twoheadrightarrow N$. By Remark \ref{rk-tensors}, every functor $M\in\PP$ is a subquotient of a finite direct sum of copies of the tensor product functors $\otimes^d$, for $d\ge 0$. Thus, it suffices to check that Equation (\ref{EqD}) holds for $M=\otimes^d$ for all $d\ge 0$.

Let $M=\otimes^d$ and let $V\in\V$.  Choose a basis $(e_1,\dots,e_n)$ of $V$. We naturally extend this to a basis $(e_1,\dots,e_{n+2})$ of $V\oplus k\oplus k$. 
By definition, $\E^2(\otimes^d)(V)$ is the subspace of $(V\oplus k\oplus k)^{\otimes d}$ spanned by the vectors of the form $e_{i_1}\otimes \dots \otimes e_{i_d}$, where exactly one of the $e_{i_k}$ equals $e_{n+1}$ and exactly one of the $e_{i_k}$ equals $e_{n+2}$. Let us fix a vector $\xi=e_{i_1}\otimes \dots \otimes e_{i_d}$ with $e_{n+1}$ in $a$-th position and $e_{n+2}$ in $b$-th position.  We will show that Equation (\ref{EqD}) holds for $\xi$.

First note that $T_{M,V}(\xi)=e_{i_{(a b)(1)}}\otimes \dots \otimes e_{i_{(a b)(d)}}$, where $(ab)$ denotes the transposition of $\mathfrak{S}_d$ which exchanges $a$ and $b$.  
Then
\begin{eqnarray*}
(X\E)_{M,V}\circ T_{M,V}(\xi)&=&\left(\sum_{j=1}^nx_{n+1,j}x_{j,n+1}-n\right).(e_{i_{(a b)(1)}}\otimes \dots \otimes e_{i_{(a b)(d)}}) \\ &=& \sum_{\ell \neq a,b} e_{i_{(\ell b a)(1)}}\otimes \dots \otimes e_{i_{(\ell b a)(d)}}.
\end{eqnarray*}
%
%We compute first the bottom left corner.
%\begin{eqnarray*}
%(X\E)_{M,V}(\xi)&=&\left(\sum_{j=1}^nx_{n+1,j}x_{j,n+1}-n\right).\xi \\ %&=& \sum_{\ell \neq a,b} e_{i_{(\ell a)(1)}}\otimes \dots \otimes e_{i_{(\ell %a)(d)}}. 
%\end{eqnarray*}  
%In the equality above $(\ell,a)$ denotes the transposition of $\mathfrak{S}_d$ %which exchanges $a$ and $\ell$.  Now applying $T_{M,V}$ we obtain:
%$$
%T_{M,V} \circ (X\E)_{M,V}(\xi)=-\sum_{\ell \neq a,b} e_{i_{(\ell b a)(1)}}\otimes %\dots \otimes e_{i_{(\ell b a)(d)}}.
%$$

Now we compute the other term on the left hand side of (\ref{EqD}).  Then,
\begin{eqnarray*}
T_{M,V}\circ (\E X)_{M,V}(\xi) &=& T_{M,V} \circ \left(\sum_{j=1}^{n+1}x_{n+2,j}x_{j,n+2}-(n+1)\right)(\xi) \\ &=& \sum_{\ell \neq a,b} e_{i_{(\ell b a)(1)}}\otimes \dots \otimes e_{i_{(\ell b a)(d)}} + \xi. 
\end{eqnarray*}
Therefore (\ref{EqD}) holds.
\end{proof}

This completes the proof of Theorem \ref{mainthm}.

\section{Remarks}

We conclude the paper by mentioning briefly some consequences of the $\g$-categorification on $\PP$.

\subsection{Derived equivalances}
For this discussion we focus on the case  $p=char(k)>0$.  The main motivation for Chuang and Rouquier's original work on categorification was to prove Broue's abelian defect conjecture for the symmetric groups, which can be reduced to showing that any two blocks of symmetric groups of the same p-weight are derived equivalent \cite{CR}. Their technique applies to the setting of $\mathfrak{sl}_2$-categorifications.  Since  for every simple root $\alpha$ of $\g$ there is a corresponding root subalgebra of $\g$ isomorphic to $\mathfrak{sl}_2$, we've in fact defined a family of $\mathfrak{sl}_2$-categorifications on $\PP$.  To each of these categorifications we can apply the Chuang-Rouqueir machinery.   

Let $W^{aff}=\Sy_p \ltimes Q$ denote the affine Weyl group associated to $\g$, acting on $P$ in the usual way.  By \cite[Section 12]{Kac},  any weight $\omega$ appearing in the weight decomposition of Fock space is of the form $\sigma(\omega_0)-\ell \delta$, where $\sigma \in W^{aff}$ and $\ell \geq 0$.  By Proposition 11.1.5 in \cite{K03},  $\ell $ is exactly the p-weight of the corresponding block.  Therefore the weights of any two blocks are conjugate by some element of affine Weyl group if and only if they have the same p-weight.  By Theorem 6.4 in \cite{CR} we obtain 
%(see Corollary 5.18 in \cite{HY11} for details):

\begin{theorem}
If two blocks of $\PP$ have the same p-weight, then they are derived equivalent.
 \end{theorem}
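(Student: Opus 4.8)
The plan is to deduce this from the Chuang--Rouquier theory of $\mathfrak{sl}_2$-categorifications, so the first task is to extract, for each simple root $\alpha_i$ of $\g$, an honest $\mathfrak{sl}_2$-categorification on $\PP$ from the $\g$-categorification established in Theorem~\ref{mainthm}. Concretely, one forgets all the functors $\E_j,\F_j$ except $\E_i,\F_i$, retains the operators $\E^{n-i}X\E^{i-1}$ and $\E^{n-i-1}T\E^{i-1}$ restricted to the $i$-eigensubfunctors (so that the degenerate affine Hecke algebra action of Proposition~\ref{prop-Hecke} collapses to the relevant nil/degenerate Hecke action used by Chuang--Rouquier), and coarsens the weight decomposition $\PP=\bigoplus_{\omega\in P}\PP_\omega$ to the decomposition along $\Z$-strings in the $\alpha_i$-direction. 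The integrability already verified as property~(5) guarantees that each such string is finite, so one really does land in the finite-dimensional $\mathfrak{sl}_2$ situation to which \cite[Thm.~6.4]{CR} applies.

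Next I would translate the $p$-weight condition on blocks into the combinatorial statement needed to invoke \cite[Thm.~6.4]{CR}, namely that the two blocks lie in the same orbit of the relevant symmetry group acting on the set of weights. For this I would use \cite[Section~12]{Kac} to write any weight $\omega$ occurring in Fock space as $\sigma(\Lambda_0)-\ell\delta$ with $\sigma\in W^{\mathrm{aff}}=\Sy_p\ltimes Q$ and $\ell\ge 0$, and then cite Proposition~11.1.5 of \cite{K03} to identify $\ell$ with the $p$-weight of the block $\PP_\omega$. The point is that $\delta$ is $W^{\mathrm{aff}}$-invariant, so two Fock-space weights are $W^{\mathrm{aff}}$-conjugate exactly when their $\ell$'s agree, i.e.\ exactly when the corresponding blocks have equal $p$-weight; since the $W^{\mathrm{aff}}$-action is generated by the simple reflections $s_i$, two blocks of equal $p$-weight are linked by a chain of single $s_i$-reflections, each of which is realized inside one of our $\mathfrak{sl}_2$-categorifications.

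Then the conclusion follows by induction on the length of such a chain: by the Chuang--Rouquier theorem, applying the Rickard complex associated to the $i$-th $\mathfrak{sl}_2$-categorification gives a derived equivalence between $\PP_\omega$ and $\PP_{s_i(\omega)}$ whenever these are reflections of one another (both nonzero), and composing these equivalences along the chain yields a derived equivalence between the two given blocks. One should note that $\PP_\omega$ is a sum of blocks in the ring-theoretic sense and each has finite global dimension (being a highest-weight category, via Theorem~\ref{portemanteau}), so ``derived equivalent'' is unambiguous here.

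\textbf{Main obstacle.} The substantive point — as opposed to the essentially formal bookkeeping above — is checking that the $\mathfrak{sl}_2$-categorification data extracted from Definition~\ref{DefinitionCategorification} genuinely meets the hypotheses of \cite[Thm.~6.4]{CR} (which is phrased for a minimal categorification/$\mathfrak{sl}_2$-action with its own normalization of the affine Hecke relations), in particular that the eigenvalue bookkeeping for $X$ on $\E_i$ lines up with Chuang--Rouquier's conventions after restricting to a single color, and that the weight-space decomposition is the one their theorem expects. This is where the hypotheses $p=\mathrm{char}(k)>0$ and the earlier remark about $p\neq 2$ enter, and it is the step that requires the most care; everything after it is a routine appeal to the Weyl-group combinatorics of \cite{Kac} and \cite{K03}.
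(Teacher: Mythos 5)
Your proposal follows essentially the same route as the paper: restrict the $\g$-categorification of Theorem~\ref{mainthm} to an $\mathfrak{sl}_2$-categorification for each simple root, use \cite[Section 12]{Kac} together with Proposition 11.1.5 of \cite{K03} to see that blocks of equal $p$-weight have $W^{\mathrm{aff}}$-conjugate weights, and then chain the derived equivalences provided by \cite[Thm.~6.4]{CR} along simple reflections. The paper states this argument more tersely (in particular it does not dwell on matching the Hecke-algebra conventions, which you rightly flag as the point needing care), but the substance is identical.
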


\subsection{Misra-Miwa crystal}

We can also apply the theory of $\g$-categorification to crystal basis theory. The crystal structure is a combinatorial structure associated to integrable representations of Kac-Moody algebras, introduced originally by Kashiwara via the theory of quantum groups.  From Kashiwara's theory one can construct a canonical basis for the corresponding representations, which agrees with Lustig's canonical basis of geometric origins.

Loosely speaking, the crystal structure of an integrable representation of some Kac-Moody algebra consists of a set $\mathbb{B}$ in bijection with a basis of the representation, along with Kashiwara operators $\tilde{e}_i,\tilde{f}_i$ on $\mathbb{B}$ indexed by the simple roots of the Kac-Moody algebra, along with further data.  For a precise definition see \cite{Ka}.  

From the $\g$-categorification on $\PP$ we can recover the crystal structure of Fock space as follows.  For the set $\mathbb{B}$ we take $Irr\PP\subset K(\PP)$, the set of equivalence classes of simple objects.  We construct Kashiwara operators on $Irr\PP$ by composing the Chevalley functors with the $socle$ functor:
$$
\tilde{e}_i,\tilde{f}_i=[socle\circ E_i],[socle\circ F_i]:Irr\PP \to Irr\PP.
$$    
The other data defining a crystal structure can also be naturally obtained.  In Section 5.3 of \cite{HY11} it is shown that this data agrees with the crystal of $\FF$ originally discovered by Misra and Miwa \cite{MM}.

Jiuzu Hong,
School of Mathematical Sciences
Tel Aviv University,
Tel Aviv
69978, Israel.
\texttt{hjzzjh@gmail.com}
\medskip\\
Antoine Touz\'e, LAGA Institut Galil\'ee, Universit\'e Paris 13, 99, Av. J-B Cl\'ement 93430 Velletaneuse, France.\\  \texttt{touze@math.univ-paris13.fr }
\medskip\\
Oded Yacobi,
Department of Mathematics,
University of Toronto,
Toronto, ON, M5S 2E4
Canada.\\  \texttt{oyacobi@math.toronto.edu}

\end{document}